\documentclass[final,onefignum,onetabnum]{siamart251104}

\usepackage{microtype}
\usepackage{enumitem}

\newcommand{\bx}{\boldsymbol{x}}
\newcommand{\bs}{\boldsymbol{s}}
\newcommand{\blambda}{\boldsymbol{\lambda}}
\newcommand{\bu}{\boldsymbol{u}}

\newcommand{\bz}{\boldsymbol{z}}

\newcommand{\bq}{\boldsymbol{q}}

\newcommand{\bK}{\boldsymbol{K}}

\newcommand{\bF}{\boldsymbol{F}}
\newcommand{\bG}{\boldsymbol{G}}
\newcommand{\bd}{\boldsymbol{d}}
\newcommand{\bp}{\boldsymbol{p}}

\newcommand{\bLambda}{\boldsymbol{\Lambda}}

\newcommand{\bB}{\boldsymbol{B}}
\newcommand{\bC}{\boldsymbol{C}}

\newcommand{\bH}{\boldsymbol{H}}

\newcommand{\br}{\boldsymbol{r}}

\newcommand{\st}{\mathop{\text{\normalfont s.t.}}}
\newcommand{\diag}{\mathop{\text{\normalfont diag}}}
\newcommand{\rank}{\mathop{\text{\normalfont rank}}}

\newcommand{\cK}{\mathcal{K}}

\newcommand{\cI}{\mathcal{I}}
\newcommand{\cJ}{\mathcal{J}}

\usepackage{braket,amsfonts}

\usepackage{array}

\usepackage[caption=false]{subfig}
\captionsetup[subtable]{position=bottom}
\captionsetup[table]{position=bottom}

\usepackage{pgfplots}


\usepackage{amsmath}

\crefname{equation}{}{}
\newtheorem{assumption}{Assumption}[section]

\usepackage{cleveref}

\crefname{theorem}{Theorem}{Theorems}
\crefname{assumption}{Assumption}{Assumptions}

\newsiamremark{remark}{Remark}

\usepackage{algorithmic}

\usepackage{graphicx,epstopdf}


\usepackage{amsopn}

\usepackage{xspace}
\usepackage{bold-extra}
\usepackage[most]{tcolorbox}

\colorlet{texcscolor}{blue!50!black}
\colorlet{texemcolor}{red!70!black}
\colorlet{texpreamble}{red!70!black}
\colorlet{codebackground}{black!25!white!25}

\lstdefinestyle{siamlatex}{%
  style=tcblatex,
  texcsstyle=*\color{texcscolor},
  texcsstyle=[2]\color{texemcolor},
  keywordstyle=[2]\color{texemcolor},
  moretexcs={cref,Cref,maketitle,mathcal,text,headers,email,url},
}

\tcbset{%
  colframe=black!75!white!75,
  coltitle=white,
  colback=codebackground, 
  colbacklower=white, 
  fonttitle=\bfseries,
  arc=0pt,outer arc=0pt,
  top=1pt,bottom=1pt,left=1mm,right=1mm,middle=1mm,boxsep=1mm,
  leftrule=0.3mm,rightrule=0.3mm,toprule=0.3mm,bottomrule=0.3mm,
  listing options={style=siamlatex}
}

\newtcblisting[use counter=example]{example}[2][]{%
  title={Example~\thetcbcounter: #2},#1}

\newtcbinputlisting[use counter=example]{\examplefile}[3][]{%
  title={Example~\thetcbcounter: #2},listing file={#3},#1}

\DeclareTotalTCBox{\code}{ v O{} }
{ 
  fontupper=\ttfamily\color{black},
  nobeforeafter,
  tcbox raise base,
  colback=codebackground,colframe=white,
  top=0pt,bottom=0pt,left=0mm,right=0mm,
  leftrule=0pt,rightrule=0pt,toprule=0mm,bottomrule=0mm,
  boxsep=0.5mm,
  #2}{#1}



\setcounter{MaxMatrixCols}{20}

\allowdisplaybreaks
\usepackage{mathtools}
\setlength\arraycolsep{2pt}

\usepackage{parskip}

\makeatletter
\newif\ifinappendix
\inappendixfalse


\crefformat{section}{%
  \ifinappendix
    Appendix~#2#1#3%
  \else
    Section~#2#1#3%
  \fi
}
\Crefformat{section}{%
  \ifinappendix
    Appendix~#2#1#3%
  \else
    Section~#2#1#3%
  \fi
}
\makeatother

\usepackage[titletoc]{appendix}

\makeatletter
\AddToHook{cmd/appendix/before}{\def\cref@section@alias{appendix}}
\makeatother
\let\oldoverset\overset
\renewcommand{\overset}[2]{%
  \oldoverset{\raisebox{0.2ex}{\scriptsize #1}}{#2}}
\setenumerate{topsep=1pt,parsep=1pt,partopsep=1pt,leftmargin=*}


\newcommand{\WT}[1]{{\color{black} #1}}
\title{
  Improved Approximation Bounds for Moore-Penrose Inverses of Banded Matrices with Applications to Continuous-Time Linear Quadratic Control\thanks{
    This material is based upon work supported by the U.S. Department of Energy, Office of Science, Office of Advanced Scientific Computing Research (ASCR) under Contract DE-AC02-06CH11347.
    }
}

\author{
  Sungho Shin\thanks{Department of Chemical Engineering, Massachusetts Institute of Technology (\url{sushin@mit.edu}).
    Majority of this work is performed while the author was a postdoctoral researcher at Argonne National Laboratory.}
  \and
  Wallace Gian Yion Tan \thanks{Department of Chemical Engineering, Massachusetts Institute of Technology (\url{wtgy@mit.edu}).}
  \and
  Mihai Anitescu\thanks{Mathematics and Computer Science Division, Argonne National Laboratory, Lemont, IL (\url{anitescu@mcs.anl.gov}) and Department of Statistics, University of Chicago, Chicago, IL.}
}

\date{}




\begin{document}
\maketitle

\begin{abstract}
  We present improved approximation bounds for the Moore-Penrose inverses of banded matrices, where the bandedness is induced by a metric on the index set. We show that the pseudoinverse of a banded matrix can be approximated by another banded matrix, and the error of approximation is exponentially small in the ratio of the bandwidth of the approximation to that of the original matrix. An intuitive corollary can be obtained: the off-diagonal blocks of the pseudoinverse decay exponentially with the distance between the node sets associated with row and column indices, on the given metric space. Our bounds are expressed in terms of the bound of singular values of the system. For saddle point systems, commonly encountered in optimization, we provide the bounds of singular values associated under standard regularity conditions. Remarkably, our bounds improve previously reported ones and allow us to establish a perturbation bound for continuous-domain optimal control problems by analyzing the asymptotic limit of their finite difference discretization, which has been challenging with previously reported bounds. 
\end{abstract}


\section{Introduction}\label{sec:intro}
We study an approximation of the Moore-Penrose inverse of a matrix $A\in \mathbb{R}^{m\times n}$ satisfying:
\begin{align}\label{eqn:glin}
  A[I_i,J_j]=0,\quad \forall i,j\in V \text{ such that } d(i,j)> \overline{\kappa},
\end{align}
where
$(V\subseteq \mathbb{Z}_{>0},d:V\times V \rightarrow \mathbb{R})$ is a finite metric space;
$\cI:=\{I_i\}_{i\in V}$ and $\cJ:=\{J_i\}_{i\in V}$ are partitions of $\mathbb{Z}_{[1,n]}$ and $\mathbb{Z}_{[1,m]}$, respectively;
and $\overline{\kappa}\in\mathbb{Z}_{\geq 0}$.
Here,
$\mathbb{R}$ and $\mathbb{Z}$ denote the set of real numbers and the set of integers, respectively;
$\mathbb{Z}_{[a,b]}:=\mathbb{Z}\cap [a,b]$;
$\mathbb{Z}_{\geq 0}:=\mathbb{Z}_{[0,\infty)}$; 
$\mathbb{Z}_{> 0}:=\mathbb{Z}_{(0,\infty)}$;
and $A[I_i,J_j]:=\{A[i',j']\}_{i'\in I_i,j'\in J_j}$, where $A[i',j']$ is the $(i',j')$-th component of $A$.
The constant $\overline{\kappa}$ in \eqref{eqn:glin}, which represents the bandwidth of matrix $A$ associated with the index set partitions $(\cI,\cJ)$ and metric space $(V\subseteq \mathbb{Z}_{>0},d:V\times V \rightarrow \mathbb{R})$, will be simply referred to as the \emph{bandwidth} of $A$.

In the main theorem, we show that for all $\kappa\in \mathbb{Z}_{>0}$, if the nonzero singular values $\sigma_{>0}(A)$ of $A$ are within $[a,b]\subseteq \mathbb{R}_{>0}$, the Moore-Penrose inverse $A^+\in\mathbb{R}^{n\times m}$---a unique matrix satisfying $AA^+ A = A$, $ A^+A A^+ = A^+$, $ (AA^+)^\top = AA^+$, and $ (A^+A)^\top =A^+A$---admits an exponentially accurate $\kappa$-banded approximation $\widetilde{A}\in \mathbb{R}^{n\times m}$:
\begin{align*}
  \widetilde{A}[J_i,I_j]= 0\quad \forall i,j\in V \text{ such that } d(i,j)> \kappa
\end{align*}
and
\begin{align}\label{eqn:approx-bound}
\left\| A^+- \widetilde{A} \right\|_2& \leq \frac{4\left(b+a\right)^{3/2}}{ab(b-a)^{1/2}}\left(\frac{b-a}{b+a}\right)^{\left\lceil \frac{\kappa/\overline{\kappa} -1}{2}\right\rceil}.
\end{align}
\WT{where $\left\lceil\cdot\right\rceil$ is the ceiling operator.}
That is, by increasing the bandwidth upper bound of the approximation $\widetilde{A}$, the approximation becomes exponentially accurate.
A tighter bound is provided for the symmetric positive semi-definite case $A\succeq 0$.
An important corollary can be drawn from the above result: the off-diagonal blocks of $A^+$ exponentially decay with the distance between the node sets associated with row and column indices on $(V,d)$.

The exponentially accurate banded approximation finds applications in diverse areas.
It can be used as a banded preconditioner \cite{johnsonPolynomialPreconditionersConjugate1983},
Furthermore, the banded approximation bounds have been used to analyze the convergence rate of decomposition algorithms for large-scale optimization problems \cite{na_convergence_2022}.
Recently, these bounds are applied to show the near-optimality of distributed control \cite{shin_near-optimal_2023} and stochastic predictive control \cite{shin_near-optimal_2023-1}.
Lastly, these bounds can be used to compute approximations of dense inverses or Schur complements, which often have significance in uncertainty quantification \cite{chen2023scalable}. 

The existing literature \cite{shin2022exponential, demko1984decay} suggests that for square, non-singular cases, the off-diagonal blocks of the inverse exhibit an exponentially decaying bound relative to the distance between the blocks over the given metric space. In \cite{shin2022exponential}, the concept of a graph-induced banded system is introduced and an exponential perturbation bound is established. In \cite{demko1984decay}, the decay of the inverse of positive definite, banded matrices was studied.
In \cite{bickel2012approximating}, the approximation bound of the inverse of banded linear operators has been studied, and the results are applied to analyze the mixing property of the covariance matrix of Gaussian processes.
The related works \cite{shin2022exponential,demko1984decay,bickel2012approximating} are all based on polynomial approximation strategy \cite{rivlin_introduction_1981}, and thus, with a slight modification, these results can be extended to indefinite banded systems indexed by metric spaces, as outlined in \cref{prop:demko,prop:shin}. 
The bounds derived from the existing approaches (after suitable modifications) are shown below:
\begin{align}
  \left\|A^+ - \widetilde{A}\right\|_2
  &\leq \frac{(a+b)^2}{2a^2b}\left(\frac{b-a}{b+a}\right)^{\left\lceil \frac{\kappa/\overline{\kappa}-1}{2}\right\rceil}\tag{\cref{prop:demko}; based on \cite{demko1984decay}}\\
  \left\|A^+ - \widetilde{A}\right\|_2
  &\leq \frac{b}{a^2}\left(\frac{b^2-a^2}{b^2+a^2}\right)^{\left\lceil \frac{\kappa/\overline{\kappa}-1}{2}\right\rceil}, \tag{\cref{prop:shin}; based on \cite{shin2022exponential,bickel2012approximating}}
\end{align}
where the same bandwidth upper bound is imposed to the approximation $\widetilde{A}$.
By comparing \cref{eqn:approx-bound} with the above equation, one can see that, in the case \WT{where $b$ is sufficiently large}, indeed our bound is tighter than the existing results.

The proof technique used in our work is similar to previous results \cite{demko1984decay,shin2022exponential,bickel2012approximating}, in that they all utilize a certain polynomial approximation $p(x)$ of $1/x$ to derive an error bound of $A^+ - p(A)$ for a matrix $A$. However, this approach only applies to positive definite $A$ and cannot handle indefinite, rank-deficient $A$. To address this limitation, we utilize an odd polynomial (i.e., $p(x) = -p(-x)$) along with the singular value decomposition of $A$. This approach enables approximating $A^+$ with a polynomial-like expression of $A$ and $A^\top$. This approach allows us to find a tighter approximation bound for indefinite, or even rectangular systems.

One of the practical limitations of the existing results \cite{demko1984decay,shin2022exponential} (and the modified versions in \cref{prop:demko,prop:shin})
is that they do not provide tight bounds for systems obtained by the finite-difference discretization of continuous-domain problems, like optimal control.
With the existing results, the error bounds diverge as the discretization mesh size decreases.
Our results overcome this limitation by establishing error bounds that can remain uniformly bounded as the discretization mesh size tends to zero.
We demonstrate this by analyzing the solution mapping for a discretized optimal control problem and showing that the perturbation bounds are uniform in the mesh size.
This further allows us to characterize the solution mapping in the continuous domain using the asymptotic limit of the solution mapping for the discretized problem. 

\WT{
Moreover, existing results only apply to instances where the metric is induced by geometric distances between nodes or graph adjacency structures. Our results generalize both cases to a much wider variety of settings by allowing for \textit{any} metric on the index set, as long as the bandedness property in \cref{eqn:glin} is satisfied. This flexibility is important as notably, in PDE discretizations, the geometric distance between two nodes may differ from the graph-induced distance.  
}

\paragraph{Contributions}
This paper has three main contributions. First, we generalize the concept of the bandedness to arbitrary metric spaces, allowing for more versatile applications of our main approximation result. Second, we provide an improved approximation bound for the Moore-Penrose inverse of banded matrices, which improves the results of \cite{shin2022exponential,demko1984decay}. The tighter bound allows analyzing the discretization of infinite-dimensional linear systems. For the third contribution, we apply our result to analyze the perturbation bound of continuous-time linear-quadratic optimal control problems, arguably one of the most studied linear system problems. We show that the solution mappings of this problem exhibit uniform exponential decay. This result establishes the connection between the exponential decay in discrete spaces with the counterparts in continuous-domain analysis \cite{grune2020exponential,gottlichPerturbationsPDEconstrainedOptimal2024}

\paragraph{Organization}

The remainder of the paper is organized as follows. 
In \Cref{sec:main}, we present our main results, namely the exponential bound for the banded approximations.
In \Cref{sec:kkt}, we demonstrate that the saddle point systems, derived from optimization problems, have bounded singular values when satisfying standard regularity conditions, such as linear independence constraint qualifications, second-order sufficiency conditions.  
\WT{In \Cref{sec:control}, we apply our results to analyze the optimal control problems.
Numerical verification of the main results are provided in \Cref{sec:num}.}
Finally, \Cref{sec:concl} presents the conclusions and future outlook.

\section{Main Results}\label{sec:main}
In this section, we present our main results, which are the exponential bounds for the banded approximations discussed in \Cref{sec:intro}. We state the main theorem and provide a proof using polynomial or odd polynomial approximations of $1/x$. The main challenge lies in obtaining tight bounds for these polynomial approximations, which we achieve by using Chebyshev polynomials.
Finally, we compare our findings with existing bounds in the literature.

\subsection{Banded Matrices Indexed by Finite Metric Spaces}
In this section, we formally define the metric space and banded systems indexed by metric spaces.

\begin{definition}\label{def:metric}
  A metric space is a pair $(V,d)$ consisting of a set $V$ and a function $d:V\times V\rightarrow \mathbb{R}$ that satisfies the following properties:
  \begin{enumerate}[label=(\alph*)]
  \item\label{enum:metric-a} $d(i,j)\geq 0$ for all $i,j\in V$.
  \item\label{enum:metric-b} $d(i,j)=0$ if and only if $i=j$.
  \item\label{enum:metric-c} $d(i,j)=d(j,i)$ for all $i,j\in V$.
  \item\label{enum:metric-d} $d(i,j)\leq d(i,k)+d(k,j)$ for all $i,j,k \in V$.
  \end{enumerate}
  \WT{Furthermore, if $V$ is finite, the metric space $(V,d)$ is called a finite metric space.}
\end{definition}

\begin{remark}
  One example of a finite-dimensional metric space is $(V,d_G)$, where $G = (V,E)$ is a graph, $V$ is the node set, $E \subseteq \{\{i,j\} : i,j \in V\}$ is the edge set, and $d_G(i,j)$ is the geodesic distance induced by the graph $G$. Here, $d_G(i,j)$ is defined as the length of the shortest path from $i$ to $j$, where a path from $i \in V$ to $j \in V$ is a sequence of edges $\{\{i_k,i_{k+1}\}\}_{k=0}^{m-1}$ such that $i_0 = i$ and $i_m = j$. One can confirm that the conditions in \cref{def:metric} are satisfied (see \cite{shin2022exponential} for the complete proof).
\end{remark}

We now formally define the banded matrices indexed by finite metric spaces.
\begin{definition}\label{def:banded}
  Consider $A\in \mathbb{R}^{m\times n}$, a finite metric space $(V,d)$, partitions $\cI:=\{I_i\}_{i\in V}$ and $\cJ:=\{J_j\}_{j\in V}$ of $\mathbb{Z}_{[1,n]}$ and $\mathbb{Z}_{[1,m]}$, and $\overline\kappa\in\mathbb{Z}_{>0}$. We say $A$ is $(\overline\kappa,V,d,\cI,\cJ)$-banded if \cref{eqn:glin} is satisfied.
\end{definition}
When the corresponding metric space $(V,d)$ and partitions $\cI$ and $\cJ$ are clear from the context, we abbreviate the notation and say $A$ is $\overline\kappa$-banded. Moreover, we say $\overline\kappa$ is the bandwidth of $A$ if $\overline\kappa$ is the smallest integer such that $A$ is $\overline\kappa$-banded.

\WT{Note that the same sparsity structure for $A$ can be induced by multiple distinct metrics on the index set. Although the appropriate metric may be inferred from the particular context, such as in graph-structured problems, or be purely based on the sparsity pattern of the system (e.g., see \cite[remark 2.2]{shin2022exponential}), \cref{def:banded} can be applied independently of the context in which the bandedness arises.} 


Now we study the basic properties of the banded matrices indexed by metric spaces.
\begin{proposition}\label{prop:basic}
  Suppose that $A\in \mathbb{R}^{m\times n}$, $B\in \mathbb{R}^{m\times n}$, $C\in \mathbb{R}^{n\times \ell}$, are $(\kappa_A,V,d,\cI,\cJ)$,  $(\kappa_B,V,d,\cI,\cJ)$, and $(\kappa_C,V,d,\cJ,\cK)$-banded, respectively. Then,
  \begin{enumerate}[label=(\alph*)]
  \item\label{enum:basic-a} $A^\top$ is $(\kappa_A,V,d,\cJ,\cI)$-banded
  \item\label{enum:basic-b} $A+B$ is $(\max(\kappa_A,\kappa_B),V,d,\cI,\cJ)$-banded
  \item\label{enum:basic-c} $AC$ is $(\kappa_A+\kappa_C,V,d,\cI,\cK)$-banded.
  \end{enumerate}
\end{proposition}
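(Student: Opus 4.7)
The plan is to verify each of the three parts directly from the definition of $(\kappa, V, d, \cI, \cJ)$-bandedness given in \eqref{eqn:glin}, using only elementary properties of matrix operations together with the metric space axioms of \Cref{def:metric}. None of the three parts should require anything deep; the only genuine piece of geometric content appears in part \ref{enum:basic-c}, where the triangle inequality enters. I would therefore not expect any real obstacle, but I will be careful about indexing, since row/column partitions and the associated metric labels are shuffled around between the three statements.

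For part \ref{enum:basic-a}, I would observe that $A^\top[J_j, I_i]$ is the transpose of the block $A[I_i, J_j]$ (this is really just the definition of the transpose restricted to index subsets). Combined with the symmetry axiom \ref{enum:metric-c}, namely $d(j,i) = d(i,j)$, the implication
\begin{equation*}
d(j,i) > \kappa_A \;\Longrightarrow\; A^\top[J_j, I_i] = (A[I_i, J_j])^\top = 0
\end{equation*}
follows immediately.

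For part \ref{enum:basic-b}, I would simply note that entrywise addition yields $(A+B)[I_i, J_j] = A[I_i, J_j] + B[I_i, J_j]$. If $d(i,j) > \max(\kappa_A, \kappa_B)$, then both $d(i,j) > \kappa_A$ and $d(i,j) > \kappa_B$, so each summand vanishes by the hypothesized bandedness of $A$ and $B$, and the sum is zero.

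The one step that requires a bit of thought is part \ref{enum:basic-c}. Writing the product blockwise using the common partition $\cJ$,
\begin{equation*}
(AC)[I_i, K_j] = \sum_{k \in V} A[I_i, J_k]\, C[J_k, K_j],
\end{equation*}
I would argue by contrapositive: for this block to be nonzero, there must exist $k \in V$ with both $A[I_i, J_k] \neq 0$ and $C[J_k, K_j] \neq 0$, which by the bandedness hypotheses forces $d(i,k) \leq \kappa_A$ and $d(k,j) \leq \kappa_C$. The triangle inequality \ref{enum:metric-d} then gives $d(i,j) \leq d(i,k) + d(k,j) \leq \kappa_A + \kappa_C$. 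Equivalently, whenever $d(i,j) > \kappa_A + \kappa_C$, no summand can be nonzero and $(AC)[I_i, K_j] = 0$. The only subtlety worth flagging is that the partition $\cJ$ used to index the columns of $A$ must coincide with the one indexing the rows of $C$, which is precisely the setup stated in the proposition and is what makes the decomposition of $AC$ into blocks $A[I_i, J_k] C[J_k, K_j]$ well-defined.
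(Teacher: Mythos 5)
Your proposal is correct and follows essentially the same route as the paper's proof: symmetry of $d$ for the transpose, entrywise vanishing for the sum, and the triangle inequality for the product (your contrapositive phrasing of part (c) is logically identical to the paper's direct statement that $d(i,j)>\kappa_A+\kappa_C$ forces $d(i,k)>\kappa_A$ or $d(k,j)>\kappa_C$ for every $k$). No gaps.
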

\begin{proof}
  Throughout the proof, we will let $\cI:=\{I_i\}_{i\in V}$, $\cJ:=\{J_i\}_{i\in V}$, and $\cK:=\{K_i\}_{i\in V}$.

We first prove \ref{enum:basic-a}. We first observe that for all $i,j\in V$ with $d(i,j)>\kappa_A$, we have $d(j,i) > \kappa_A$ due to \cref{def:metric}\ref{enum:metric-c}, and thus, we have $$(A^\top)[J_i,I_j]=A[I_j,J_i]^\top=0.$$ Thus, $A^\top$ is $\kappa$-banded.

Next, we prove \ref{enum:basic-b}. We observe that if $d(i,j)>\max(\kappa_A,\kappa_B)$, we have that both $A[I_i,J_j]$ and $B[I_i,J_j]$ are both zero. Thus, $$(A+B)[I_i,J_j]=0$$ as well. Therefore, $A+B$ is $\kappa$-banded.

Lastly, we prove \ref{enum:basic-c}. We have that
For each $i,j,k\in V$ with $d(i,j) > \kappa_A + \kappa_C$, due to \cref{def:metric}\ref{enum:metric-a} and \ref{enum:metric-d}, we either have $d(i,k) > \kappa_A$ or $d(k,j) > \kappa_C$. This implies that $$(AC)[I_i,J_j] = \sum_{k\in V}A[I_i,K_k]C[K_k,J_j] = 0,\quad \forall i,j\in V\text{ such that }d(i,j)>\kappa_A+\kappa_C.$$
Therefore, $AC$ is $(\kappa_A+\kappa_C)$-banded.

\end{proof}

\cref{prop:basic} shows that the bandedness property is preserved under transposition, addition, and multiplication. Thus, applying polynomial operations allows for the matrix bandwidth to be bounded from above. This rationale justifies the use of a polynomial approximation strategy for the pseudoinverse.

\subsection{Main Theorem}
We are now ready to state our main theorem.
\begin{theorem}\label{thm:main}
  Consider $A\in \mathbb{R}^{m\times n}$, a finite metric space $(V,d)$, partitions $\cI:=\{I_i\}_{i\in V}$ and $\cJ:=\{J_j\}_{j\in V}$ of $\mathbb{Z}_{[1,n]}$ and $\mathbb{Z}_{[1,m]}$, and $\overline\kappa\in\mathbb{Z}_{>0}$. Suppose that $A$ is $(\overline{\kappa},V,d,\cI,\cJ)$-banded, and $\sigma_{>0}(A)\subseteq [a,b]\subseteq \mathbb{R}_{>0}$. For all $\kappa\in \mathbb{Z}_{>0}$, there exists $({\kappa},V,d,\cJ,\cI)$-banded $\widetilde{A}\in \mathbb{R}^{n\times m}$ such that:
  \begin{align}\label{eqn:main}
    \left\|A^+ - \widetilde{A}\right\|_2\leq
    f_A(\kappa/\overline{\kappa},a,b)
  \end{align}
  where
  \begin{align}\label{eqn:f}
    f_A(\omega,a,b)
    &:=
      \begin{cases}
        f_1(\omega,a,b):=\dfrac{(\sqrt{b}+\sqrt{a})^2}{2a b}\left(\dfrac{\sqrt{b}-\sqrt{a}}{\sqrt{b}+\sqrt{a}}\right)^{\left\lceil \omega\right\rceil},
        &\text{if $A\succeq 0$}.\\
        f_2(\omega,a,b):=\dfrac{4\left(b+a\right)^{3/2}}{ab(b-a)^{1/2}}\left(\dfrac{b-a}{b+a}\right)^{\left\lceil \frac{\omega -1}{2}\right\rceil}&\text{otherwise.}
      \end{cases}
  \end{align}
\end{theorem}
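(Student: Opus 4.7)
The plan is to exhibit $\widetilde{A}$ as a polynomial-like expression in $A$ (and, in the general case, also $A^\top$), so that \Cref{prop:basic} controls its bandwidth through the polynomial degree, while the spectral/SVD representation of $A$ collapses the operator-norm error to a scalar approximation of $1/x$ on the positive part of the spectrum, to which Chebyshev polynomial theory supplies tight constants.

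For the PSD case (yielding $f_1$), I would work with the eigendecomposition $A=U\Sigma U^\top$ and set $\widetilde{A}=p(A)$ for a polynomial $p$ of degree at most $\lceil\kappa/\overline{\kappa}\rceil-1$; by \Cref{prop:basic}, $p(A)$ is then $\kappa$-banded. The error reduces to $\max_{\sigma\in\sigma_{>0}(A)}|p(\sigma)-1/\sigma|$ together with a term $|p(0)|$ on the kernel of $A$, which I would kill by enforcing $p(0)=0$ (e.g.\ by subtracting a rescaled Chebyshev polynomial normalized to $1$ at the origin, contributing only an exponentially small correction). Taking $p$ to be the best Chebyshev approximation of $1/x$ on $[a,b]$---whose expansion has decay rate $\rho=(\sqrt{b}-\sqrt{a})/(\sqrt{b}+\sqrt{a})$---and carefully tracking the prefactor in its tail yields $f_1$.

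For the general (indefinite or rectangular) case (yielding $f_2$), I would introduce the augmented matrix $M=\begin{bmatrix} 0 & A \\ A^\top & 0 \end{bmatrix}$, whose nonzero spectrum lies in $[-b,-a]\cup[a,b]$ and whose pseudoinverse has $A^+$ as its lower-left block. The identity $M^{2j+1}=\begin{bmatrix} 0 & A(A^\top A)^j \\ (A^\top A)^j A^\top & 0 \end{bmatrix}$ shows that for any odd polynomial $p(x)=xq(x^2)$ of degree $2k+1$, $p(M)=\begin{bmatrix} 0 & A\,q(A^\top A) \\ q(A^\top A)\,A^\top & 0 \end{bmatrix}$, and because $p$ is odd it vanishes automatically at $0$. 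I would take $\widetilde{A}:=q(A^\top A)A^\top$, which by \Cref{prop:basic} is $(2k+1)\overline{\kappa}$-banded, so choosing $k=\lfloor(\omega-1)/2\rfloor$ with $\omega=\kappa/\overline{\kappa}$ preserves $\kappa$-bandedness. Via SVD, the error becomes $\max_{\sigma\in\sigma_{>0}(A)}|\sigma q(\sigma^2)-1/\sigma|$, equivalently the best \emph{odd} polynomial approximation of $1/z$ on $[-b,-a]\cup[a,b]$; the substitution $w=z^2$ converts this into a weighted Chebyshev problem on $[a^2,b^2]$ with Joukowski ratio $\rho'=(b-a)/(b+a)$, from which $f_2$ follows.

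The main technical obstacle I anticipate is extracting the exact prefactors in \cref{eqn:f}---not merely the decay rates $\rho^{\lceil\omega\rceil}$ and $\rho'^{\lceil(\omega-1)/2\rceil}$. This requires an explicit Chebyshev series computation for $1/x$ on $[a,b]$ and careful bookkeeping of both the series tail and the $\sqrt{w}$ weight introduced by the substitution in the odd case, together with verifying that the enforcement of $p(0)=0$ in the rank-deficient PSD case costs only a negligible factor in the bound.
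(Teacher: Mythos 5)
Your proposal follows essentially the same route as the paper: a polynomial $q(A)$ for the PSD case and an odd polynomial $A\,q(A^\top A)$ (your Jordan--Wielandt block matrix $M$ is just a packaging of the same computation, since $p(M)$ with $p(x)=xq(x^2)$ has off-diagonal block $A\,q(A^\top A)$) for the general case, with the bandwidth controlled through \Cref{prop:basic} and the operator-norm error collapsed via the SVD to a scalar approximation of $1/x$ on $[a,b]$. The one piece you defer---the explicit $\sqrt{w}$-weighted Chebyshev tail estimate on $[a^2,b^2]$ after the substitution $w=z^2$, which is what upgrades the prefactor from the crude $O(b/a^2)$ of the unweighted bound (\Cref{prop:demko}) to the claimed $4(b+a)^{3/2}/\bigl(ab(b-a)^{1/2}\bigr)$---is precisely the content of the paper's \Cref{lem:polynomial}, and your identification of it (together with the $p(0)=0$ issue on the kernel, which you actually treat more explicitly than the paper does) as the main remaining obstacle is accurate.
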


\WT{The proof of \cref{thm:main} relies on \cref{lem:decay} for bounding the approximation error and \cref{lem:polynomial} for constructing an appropriate polynomial-like matrix approximation. The proofs of both lemmas will be given later in this section.}

The theorem states that banded systems have exponentially accurate banded approximations, with the error bound decaying exponentially as the bandwidth upper bound of the approximation increases. The bounds depend only on the singular value bounds and the bandwidth upper bounds of the original matrix and the approximate Moore-Penrose inverse. As long as the singular value bounds and the bandwidth upper bounds are uniformly bounded, the bounds will remain uniform even as the system dimension increases. For both the general and positive semi-definite case, the constant factors $(\sqrt{b}+\sqrt{a})^{2}/2ab$ and $4(b+a)^{3/2}/ab(b-a)^{1/2}$ are  $O(1/a)$, which is reasonable since $\|A^+\|_2$ is always smaller than the smallest singular value of $A$. One can also observe that the bounds that can be applied to $A\succeq 0$ case are tighter than the general one.

\WT{
\begin{remark}\label{rmk:neumann}
  Classical Neumann Series approaches provide polynomial approximation for $A^{-1}$ by using the truncation of the Neumann Series: $A^{-1} = (I-E)^{-1} =  \sum_{k=0}^\infty E^k $, where $A = I - E$, and the spectral radius of $E$ is assumed to be strictly less than $1$. This essentially applies polynomial approximation of $1/x$ over $1+(1-x) + (1-x)^2 + \cdots$. Our approach aims to apply tighter polynomial approximations of $1/x$ over $[a,b]$ directly, which leads to improved bounds.
\end{remark}
}

\cref{thm:main} is established by leveraging polynomial approximations of $1/x$ over $[a,b]\subseteq \mathbb{R}_{> 0}$.
The following lemma provides the connection between polynomial approximations and the banded approximation bounds.
Here, we denote by $P_n$ the set of $n$-th degree polynomials.

\begin{lemma}\label{lem:decay}
   \WT{Suppose that $A \in \mathbb{R}^{m \times n}$ is $(\overline{\kappa},V,d,\cI,\cJ)$-banded, $\sigma_{>0}(A)\subseteq [a,b]\subseteq \mathbb{R}_{>0}$}, and that there exist a sequence $\{p_{2n+1}(\cdot)\in P_{2n+1}\}_{n=0}^\infty$ of odd polynomials and $g(\cdot)$ such that
  \begin{align}\label{eqn:polbound}
    \left|1/x - p_{2n+1}(x) \right|\leq g(n), \quad \forall n\in\mathbb{Z}_{\geq 0},\; x\in[a,b].
  \end{align}
  Then, for all $\kappa\in\mathbb{Z}_{>0}$, there exists a $({\kappa},V,d,\cJ,\cI)$-banded $\widetilde{A}$ such that
  \begin{align}\label{eqn:decay}
    \left\|A^{+} - \widetilde{A}\right\|_2 \leq g\left(\left\lceil\frac{\kappa/\overline{\kappa}-3}{2}\right\rceil\right).
  \end{align}
  Furthermore, suppose that $A\succeq 0$, and there exist a sequence $\{q_n(\cdot)\in P_n \}_{n=0}^\infty$ of polynomials and $h(\cdot)$ such that
  \begin{align}\label{eqn:polbound-psd}
    \left|1/x - q_{n}(x) \right|\leq h(n), \quad \forall n\in\mathbb{Z}_{\geq 0},\; x\in[a,b].
  \end{align}
  Then, for all $\kappa\in\mathbb{Z}_{>0}$, there exists a $({\kappa},V,d,\cJ,\cI)$-banded $\widetilde{A}$ such that
  \begin{align}\label{eqn:decay-psd}
    \left\|A^{+} - \widetilde{A}\right\|_2 \leq h\left(\left\lceil\kappa/\overline\kappa-1\right\rceil\right).
  \end{align}
\end{lemma}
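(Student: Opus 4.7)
The plan is to promote the scalar approximation of $1/x$ by $p_{2n+1}$ (resp.\ $q_n$) into a banded matrix approximation of $A^+$ by building $\widetilde A$ as a polynomial expression in $A$ and $A^\top$, bounding its bandwidth via Proposition~\ref{prop:basic}, and measuring the spectral error through the SVD of $A$ together with the orthogonal invariance of the $2$-norm. The construction must simultaneously be a polynomial in the building blocks $A$ and $A^\top$ (so that Proposition~\ref{prop:basic} applies), reduce to evaluating $p_{2n+1}$ on the singular values of $A$, and vanish on the null space of $A$ to match the defining behavior of the Moore--Penrose inverse at zero singular values.

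For the general case, I would exploit the oddness of $p_{2n+1}$ by writing $p_{2n+1}(x)=x\,q_n(x^2)$ for some $q_n\in P_n$, and taking
\[
\widetilde A \;:=\; q_n(A^\top A)\,A^\top.
\]
Proposition~\ref{prop:basic} immediately yields that $A^\top A$ is $(2\overline\kappa, V, d, \cJ, \cJ)$-banded, $q_n(A^\top A)$ is $(2n\overline\kappa, V, d, \cJ, \cJ)$-banded, and $\widetilde A$ is $((2n+1)\overline\kappa, V, d, \cJ, \cI)$-banded. Substituting the SVD $A=U\Sigma V^\top$ gives $\widetilde A = V\,q_n(\Sigma^\top\Sigma)\,\Sigma^\top\,U^\top$, whose only nonzero entries are $\sigma_i q_n(\sigma_i^2)=p_{2n+1}(\sigma_i)$ at positions $(i,i)$. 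Orthogonal invariance then reduces $\|A^+-\widetilde A\|_2$ to $\max_i \bigl|(\Sigma^+)_{ii}-p_{2n+1}(\sigma_i)\bigr|$. For $\sigma_i>0$, this diagonal error is $|1/\sigma_i-p_{2n+1}(\sigma_i)|\le g(n)$ by \eqref{eqn:polbound}; for $\sigma_i=0$ the oddness forces $p_{2n+1}(0)=0$ which matches $(\Sigma^+)_{ii}=0$, so the contribution vanishes. Finally, choosing $n:=\lceil(\kappa/\overline\kappa-3)/2\rceil$ (which satisfies $(2n+1)\overline\kappa\le\kappa$) yields the desired bandwidth and the bound~\eqref{eqn:decay}.

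The PSD case is analogous but does not require the oddness device: I would set $\widetilde A := q_n(A)$, which by Proposition~\ref{prop:basic} is $(n\overline\kappa, V, d, \cJ, \cI)$-banded. The spectral decomposition of $A$ and orthogonal invariance give $\|A^+-q_n(A)\|_2 \le \max_i |\lambda_i^+ - q_n(\lambda_i)|$, which is controlled by $h(n)$ on the interval $[a,b]$ via \eqref{eqn:polbound-psd}; picking $n:=\lceil \kappa/\overline\kappa-1\rceil$ delivers \eqref{eqn:decay-psd}. The main conceptual obstacle—what distinguishes this argument from the square-invertible polynomial-approximation proofs in \cite{demko1984decay, bickel2012approximating, shin2022exponential}—is the identification of the correct polynomial surrogate $q_n(A^\top A)A^\top$ in the rectangular/indefinite setting: pairing $q_n$ with $A^\top A$ (not $A$) is forced by bandwidth bookkeeping and dimensional compatibility, while multiplication by $A^\top$ on the right is forced by the requirement that $\widetilde A$ annihilate the nullspace of $A$, which in turn is what makes the oddness of $p_{2n+1}$ the right structural assumption. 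Once this surrogate is in hand, the remainder of the proof is routine bookkeeping via Proposition~\ref{prop:basic} and the SVD.
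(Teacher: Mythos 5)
Your proposal is correct and follows essentially the same route as the paper's proof: the paper uses the surrogate $\sum_{j=0}^{n} a_{2j+1}A(A^\top A)^{j}$ (the transpose-oriented version of your $q_n(A^\top A)A^\top$), the same bandwidth bookkeeping via \Cref{prop:basic} with the same choices $n=\lceil(\kappa/\overline{\kappa}-3)/2\rceil$ and $\lceil\kappa/\overline{\kappa}-1\rceil$, and the same SVD/orthogonal-invariance reduction to the scalar approximation bounds. Your explicit observation that oddness forces $p_{2n+1}(0)=0$, so the surrogate agrees with $A^{+}$ on the zero singular values, is a point the paper's proof glosses over and is worth retaining.
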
 

\begin{proof}
  We first prove the $A\succeq 0$ case. \WT{We explicitly introduce $\{q_n(\cdot)\in P_n \}_{n=0}^\infty$ satisfying \cref{eqn:polbound-psd} as follows:
  \begin{align}\label{eqn:qn}
    q_{n}(x)=\sum_{j=0}^n b_{j,n}x^{j}, \quad n\in\mathbb{Z}_{\geq 0},
  \end{align}
where $b_{j,n}\in\mathbb{R}$ is the coefficient of $x^j$ in $q_n(x)$. Furthermore, we let} $$\widetilde{A}:= q_{\left\lceil\kappa/\overline\kappa-1\right\rceil}(A).$$ We observe that due to \cref{prop:basic}, the bandwidth of $\widetilde{A}$ is not greater than $\overline{\kappa}\left\lceil\kappa/\overline\kappa-1\right\rceil$. By using the fact that $\left\lceil x\right\rceil < x+1$, we have
  \begin{align*}
    \overline{\kappa}\left\lceil\kappa/\overline\kappa-1\right\rceil
     < \kappa.
  \end{align*}
  Thus, $\widetilde{A}$ satisfies the bandwidth constraint.

  We now recall that $A$ admits an SVD of the following form: $$A=\Phi\Lambda \Phi^\top,$$ where $\Lambda:=\diag(\lambda_1,\cdots,\lambda_{\rank(A)})$, $\sigma_{>0}(A)=\{\lambda_1,\cdots,\lambda_{\rank(A)}\}$, and $\Phi$ is a matrix with orthonormal columns. Now, we observe that\WT{
  \begin{align}\label{eqn:qn-1}
    \begin{aligned}[t]
      \widetilde{A}&=q_{\left\lceil\kappa/\overline\kappa-1\right\rceil}(A)\\
      &\stackrel{\mathclap{\cref{eqn:qn}}}{=} \sum_{j=0}^{\left\lceil\kappa/\overline\kappa-1\right\rceil} b_{j,\left\lceil\kappa/\overline\kappa-1\right\rceil} (\Phi \Lambda \Phi^\top)^{j}\\
      &= \Phi \left(\sum_{j=0}^{\left\lceil\kappa/\overline\kappa-1\right\rceil} b_{j,\left\lceil\kappa/\overline\kappa-1\right\rceil} \Lambda^j \right) \Phi^\top\\
      &\stackrel{\mathclap{\cref{eqn:qn}}}{=} \Phi q_{\left\lceil\kappa/\overline\kappa-1\right\rceil}(\Lambda) \Phi^\top,
    \end{aligned}
  \end{align}}
where the third equality follows from the orthonormality of the columns of $\Phi$.
We now recall that the unique Moore-Penrose inverse can be expressed by $A^+=\Phi\Lambda^{-1}\Phi^\top$, and observe the following.
  \begin{align*} 
    &\|A^+-q_{\left\lceil\kappa/\overline\kappa-1\right\rceil}(A)\|_2 \\
    &\stackrel{\mathclap{\cref{eqn:qn-1}}}{=} \|\Phi(\Lambda^{-1}-q_{\left\lceil\kappa/\overline\kappa-1\right\rceil}(\Lambda))\Phi^\top\|_2\\
    &= \|\Lambda^{-1}-q_{\left\lceil\kappa/\overline\kappa-1\right\rceil}(\Lambda)\|_2\\ 
    &= \WT{
      \left\|\begin{bmatrix}
        (1/\lambda_1)-\sum_{j=0}^{\left\lceil\kappa/\overline\kappa-1\right\rceil}b_{j,\left\lceil\kappa/\overline\kappa-1\right\rceil}   \lambda^{j}_1\\
      &\ddots\\
      && (1/\lambda_{\rank(A)})-\sum_{j=0}^{\left\lceil\kappa/\overline\kappa-1\right\rceil}b_{j,\left\lceil\kappa/\overline\kappa-1\right\rceil}   \lambda^{j}_{\rank(A)}
    \end{bmatrix}\right\|_2}\\
    &\stackrel{\mathclap{\cref{eqn:qn}}}{=} \max_{\lambda \in \sigma_{>0}(A)} | 1/\lambda - q_{\left\lceil\kappa/\overline\kappa-1\right\rceil}(\lambda)|\\
    &\stackrel{\mathclap{\cref{eqn:polbound-psd}}}{\leq} h(\left\lceil\kappa/\overline\kappa-1\right\rceil),
  \end{align*}
  where the second inequality follows from the orthonormality of the columns of $\Phi$, and the third inequality follows from the diagonality of $\Lambda^{-1}$ and $q_{\lceil\kappa/\overline{\kappa}-1\rceil}(\Lambda)$.

  Now we prove the general case. Let
  \begin{align}\label{eqn:pn}
    p_{2n+1}(x)=\sum_{j=0}^n a_{2j+1}x^{2j+1},\qquad P_{2n+1}(A) := \sum_{j=0}^n a_{2j+1} A (A^\top A)^{j}
  \end{align}
  for all $n\in\mathbb{Z}_{\geq 0}$,
  and we choose $\widetilde{A}:= P_{2n_0+1}(A)$, where $n_0:=\left\lceil \frac{\kappa/\overline{\kappa}-3}{2}\right\rceil$.
  First, we verify that $\widetilde{A}$ satisfies the bandwidth constraint.
  \cref{prop:basic} implies that the bandwidth of $P_{2n_0+1}(A)$ is not greater than $(2n_0+1)\overline{\kappa} $. By using the fact that $\left\lceil x\right\rceil < x+1$, we have
  \begin{align*}
    (2n_0+1)\overline{\kappa}<{\kappa},
  \end{align*}
  which implies that the desired bandwidth condition is satisfied.
  
  Now we let $U\Sigma V^\top$ be the SVD of $A$. That is, $\Sigma:=\diag(\sigma_1,\cdots,\sigma_{\rank(A)})$, $\sigma_{>0}(A)=\{\sigma_1,\cdots,\sigma_{\rank(A)}\}$, and $U,V$ are matrices with orthonormal columns. We now observe that:
  \begin{align}\label{eqn:pn-1}
    \begin{aligned}[t]
      P_{2n_0+1}(A)
      &\;\;\stackrel{\mathclap{\cref{eqn:pn}}}{=}\;\;\sum_{j=0}^{n_0} a_{2j+1} U \Sigma V^\top (V\Sigma^2 V^\top)^{j}\\ 
      &\;\;=\;\;U\left(\sum_{j=0}^{n_0} a_{2j+1}   \Sigma^{2j+1}\right) V^\top\\
      &\;\;\stackrel{\mathclap{\cref{eqn:pn}}}{=}\;\;U p_{2n_0+1}(\Sigma) V^\top,
    \end{aligned}
  \end{align}
  where the second equality follows from the orthonormality of the columns of $V$. 
  We now recall that the unique Moore-Penrose inverse can be expressed by $A^+=U\Sigma^{-1}V^\top$, and \WT{observe by following the same lines of reasoning as the $A \succeq 0$ case that}
  \begin{equation}
    \color{black}
    \|A^+-P_{2n+1}(A)\|_2 \stackrel{\mathclap{\cref{eqn:pn}}}{=}\; \max_{\sigma\in\sigma_{>0}(A)} | 1/\sigma - p_{2n_0+1}(\sigma)| \stackrel{\mathclap{\cref{eqn:polbound}}}{\leq} g(n_0).
  \end{equation}
\end{proof}

\cref{lem:decay} suggests that if we have a polynomial approximation of $1/x$ and the associated error-bounding function, one can use that result to establish the error bound of the banded approximations of $A^+$. Now it remains to construct the odd (for the general system) or general (for positive semi-definite case) polynomial approximations of $1/x$ over $[a,b]$.
Note that deriving a naive approximation is not difficult, but ensuring that the approximation is optimal or sufficiently tight, at least in an asymptotic regime, is more challenging.
For the positive semi-definite case, one can apply the standard results in the approximation theory, which utilize Chebyshev polynomials, to obtain an optimal approximation.
For the general case, we use an odd polynomial obtained by modifying Chebyshev polynomials that is not necessarily optimal, but we will show in \Cref{sec:control} that this provides a sufficiently tight bound for analyzing time-discretized optimal control problems, where the mesh size tends to zero.
The next lemma constructs $\{p_{2n+1}(\cdot)\in P_{2n+1}\}_{n=0}^\infty$, $\{q_{n}(\cdot)\in P_{n}\}_{n=0}^\infty$, $g(\cdot)$, and $h(\cdot)$ such that \cref{eqn:polbound} and \cref{eqn:polbound-psd} hold.

\begin{lemma}\label{lem:polynomial}
\WT{Suppose that $A \in \mathbb{R}^{m \times n}$ is $(\overline{\kappa},V,d,\cI,\cJ)$-banded and $\sigma_{>0}(A)\subseteq [a,b]\subseteq \mathbb{R}_{>0}$. }Then there exists a sequence $\{p_{2n+1}(\cdot)\in P_{2n+1}\}_{n=0}^\infty$ of odd polynomials such that \cref{eqn:polbound} holds with
  \begin{align}\label{eqn:the-g}
    g(n) =\dfrac{4\left(b+a\right)^{3/2}}{ab(b-a)^{1/2}}\left(\frac{b-a}{b+a}\right)^{ n+1}
  \end{align}
  Furthermore, there exists a sequence $\{q_{n}(\cdot)\in P_{n}\}_{n=0}^\infty$ of polynomials such that \cref{eqn:polbound-psd} holds with
  \begin{align}\label{eqn:the-h}
    h(n) = \frac{(\sqrt{b}+\sqrt{a})^2}{2a b}\left(\frac{\sqrt{b}-\sqrt{a}}{\sqrt{b}+\sqrt{a}}\right)^{n+1}.
\end{align}
\end{lemma}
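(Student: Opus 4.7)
The plan is to construct both polynomial approximations from Chebyshev polynomials of the first kind, exploiting the fact that $T_m(z)$ grows like $\rho^m$ at a point $z$ outside $[-1,1]$, where $\rho = z + \sqrt{z^2-1}$ is the Joukowski parameter. For the positive semi-definite bound $h$, this is the classical best-approximation machinery for $1/x$ on $[a,b]$; for the general bound $g$, the same machinery is applied in the squared variable $y = x^2$, using that an odd polynomial of degree $2n+1$ in $x$ is exactly $x$ times a polynomial of degree $n$ in $x^2$.

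Concretely, for $h$, I would set $\phi(x) := (2x - a - b)/(b - a)$, the affine map from $[a,b]$ onto $[-1,1]$, and define
\[
q_n(x) := \frac{1}{x}\left[1 - \frac{T_{n+1}(\phi(x))}{T_{n+1}(\phi(0))}\right].
\]
Since the bracketed expression vanishes at $x = 0$ and $\phi(0) = -(a+b)/(b-a)$ lies outside $[-1,1]$ (so the denominator is nonzero), $q_n$ is a polynomial of degree $n$. The pointwise error equals $|T_{n+1}(\phi(x))|/\bigl(x\,|T_{n+1}(\phi(0))|\bigr)$. Bounding $|T_{n+1}(\phi(x))| \leq 1$ for $x\in[a,b]$, using $x \geq a$, and evaluating $|T_{n+1}(\phi(0))| = \tfrac{1}{2}(\beta^{n+1} + \beta^{-(n+1)})$ via the representation $T_m(z) = \tfrac{1}{2}\bigl[(z+\sqrt{z^2-1})^m + (z-\sqrt{z^2-1})^m\bigr]$ with $\beta = (\sqrt{b}+\sqrt{a})/(\sqrt{b}-\sqrt{a})$ produces the stated $h$ after algebraic simplification.

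For $g$, I would run the same construction in the variable $y = x^2$ on $[a^2,b^2]$. Let $\tilde q_n(y)$ denote the analog of $q_n$ there; the relevant Joukowski parameter becomes $\tilde\beta = (\sqrt{b^2}+\sqrt{a^2})/(\sqrt{b^2}-\sqrt{a^2}) = (a+b)/(b-a)$, which accounts for the decay rate $((b-a)/(a+b))^{n+1}$ appearing in $g$ (as opposed to the sharper PSD rate). Setting $p_{2n+1}(x) := x\,\tilde q_n(x^2)$ produces an odd polynomial of degree $2n+1$, and the error factors as $|1/x - p_{2n+1}(x)| = |x|\cdot|1/x^2 - \tilde q_n(x^2)|$.

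The main obstacle is pinning down the exact constant $4(a+b)^{3/2}/(ab(b-a)^{1/2})$ in $g$. The crude estimate $\max_{x}|x|\cdot\max_{x}|1/x^2 - \tilde q_n(x^2)| \leq b\cdot h_{[a^2,b^2]}(n)$ yields a constant of order $b/a^2$, which is looser than the claimed one when $a \ll b$. To recover the stated constant I expect to need an estimate that couples the $|x|$-weighting directly with the Chebyshev bound---either by arguing at the extremal points of $T_{n+1}(\tilde\phi(x^2))$ (where $|x|$ is not arbitrarily small when $|T_{n+1}|=1$), or by using a slightly different but cleaner closed-form representation tailored to the $\sqrt{y}$-weight. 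The authors' remark that this polynomial is ``not necessarily optimal'' signals that a clean but mildly loose estimate along these lines is what they have in mind.
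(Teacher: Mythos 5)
Your construction of $q_n$ for the positive semi-definite bound $h$ is correct and is exactly the standard Chebyshev/Joukowski argument; the paper simply cites \cite[Proposition 2.1]{demko1984decay} for this half. For the general bound $g$, your skeleton is also the right one: an odd polynomial of degree $2n+1$ is $x$ times a degree-$n$ polynomial in $x^2$, the Chebyshev machinery is run in the squared variable on $[a^2,b^2]$, and the decay rate $\big(\tfrac{b-a}{b+a}\big)^{n+1}$ comes out of the corresponding Joukowski parameter. The problem is that you stop exactly at the step that constitutes the content of the lemma. Your crude estimate $|x|\cdot|1/x^2-\tilde q_n(x^2)|\le b\cdot h_{[a^2,b^2]}(n)$ gives the constant $\tfrac{(a+b)^2}{2a^2b}$, which is precisely \Cref{prop:demko} --- the prior bound the lemma is written to improve. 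You correctly diagnose that a weighted estimate coupling $|x|$ to the Chebyshev error is needed, and you name two candidate routes, but you carry out neither, so the constant $\tfrac{4(b+a)^{3/2}}{ab(b-a)^{1/2}}$ is not established. Moreover, your first suggested route (arguing at extremal points of $T_{n+1}$, "where $|x|$ is not arbitrarily small") is off-target: under the substitution used here, the extremal point $y=1$ of $T_{n+1}$ corresponds to $x=a$, i.e.\ to the \emph{smallest} value of the weight.

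What actually closes the gap is a different choice of $\tilde q_n$ together with a closed-form tail. The paper truncates the Chebyshev generating series $\tfrac{1}{c-y}=-\tfrac{2t}{1-t^2}+\tfrac{4t}{1-t^2}\sum_{j\ge 0}t^jT_j(y)$ with $c=\tfrac{1+t^2}{2t}$, and proves by induction the identity $\sum_{j\ge n+1}t^jT_j(y)=t^n\tfrac{T_{n+1}(y)-tT_n(y)}{2(c-y)}$. Since the substitution gives $\sqrt{c-y}\propto x$, the weighted tail reduces to bounding $\tfrac{T_{n+1}(y)-tT_n(y)}{\sqrt{c-y}}$ uniformly; this is done by splitting off $\tfrac{T_{n+1}(y)-T_n(y)}{\sqrt{c-y}}$, which is bounded by $\sqrt 2$ because $\cos((n+1)\theta)-\cos(n\theta)=-2\sin((n+\tfrac12)\theta)\sin(\tfrac\theta2)$ cancels the $\sqrt{1-\cos\theta}$ in the denominator, and $\tfrac{(1-t)T_n(y)}{\sqrt{c-y}}$, which is bounded by $\sqrt 2$ because $c-1=\tfrac{(1-t)^2}{2t}$. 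It is this cancellation of the tail's numerator at $y=1$ (where the weight is smallest) --- not largeness of $|x|$ at Chebyshev extrema --- that converts the $O(b/a^2)$ constant into the claimed $O(1/a)$ one. Without some version of this argument your proof establishes only \Cref{prop:demko}, not \cref{eqn:the-g}.
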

\begin{proof}
  Equation \cref{eqn:polbound-psd} with $\{q_n(\cdot)\}_{n=0}^\infty$ and $h(\cdot)$ in \cref{eqn:the-h} follows from \cite[proposition 2.1]{demko1984decay}. Thus, in what follows, we focus on proving \cref{eqn:polbound} with $\{p_{2n+1}(\cdot)\}_{n=0}^\infty$ and $g(\cdot)$ in \cref{eqn:the-g}.

  First, we review the definition and key properties of Chebyshev polynomials (see \cite{rivlin_introduction_1981} for more detailed introduction). The Chebyshev polynomial of the first kind $T_n(\cdot)$ is defined as the polynomial satisfying
  \begin{align}\label{eqn:ch-def}
    T_n (\cos\theta) = \cos(n\theta),
  \end{align}
  and can be computed using the following recursive formula:
  \begin{align}\label{eqn:pro-1}
    T_0(y)= 1;\quad
    T_1(y) = y;\quad
    T_{n+1}(y) = 2y T_n(y) - T_{n-1}(y),\quad n=1,2,\cdots.
  \end{align}

  We now fix $y\in[-1,1],\; t\in (0,1)$ and observe that the following equation holds:
  \begin{align}\label{eqn:pro-2}
    \sum_{j=0}^\infty t^j T_j(y) = \frac{1-ty}{1-2ty+t^2},
  \end{align}
  which can be confirmed by setting $\theta=\cos^{-1} y$ and:
  \begin{align*}
    \sum_{j=0}^\infty t^j T_j(\cos\theta)
  &= \Re\left[\sum_{j=0}^\infty (t e^{\sqrt{-1}\theta})^j\right]\\
  &= \Re\left[\frac{1}{1-te^{\sqrt{-1} \theta}}\right]\\
  &= \frac{1 - t\cos\theta }{1-2t\cos\theta +t^2},
\end{align*}
where the first equality follows from Euler's formula, the second equality is obtained by applying the summation of geometric series and the fact that $|te^{\sqrt{-1}\theta}| < 1$, and the third equality can be obtained through simplification.
We now set
\begin{align}\label{eqn:c-t}
  c := \frac{1+t^2}{2t},
\end{align}
and with the arithmetic-geometric mean argument, one can observe that $c>1$. We now can further simplify \cref{eqn:pro-2} using $c$ as follows:
\begin{align*}
  \sum_{j=0}^\infty t^j T_j(y)
  &\stackrel{\mathclap{\cref{eqn:c-t}}}{\;\;=\;\;} \frac{1-ty}{2t(c-y)} \\
  &\;\;=\;\; \frac{1}{2} + \frac{1-tc}{2t(c-y)}\\
  &\stackrel{\mathclap{\cref{eqn:c-t}}}{\;\;=\;\;} \frac{1}{2} + \frac{1-t^2}{4t(c-y)}.
\end{align*}
By multiplying both sides by $4t/(1-t^2)$ and rearranging, we obtain the following:
\begin{align}\label{eqn:chebyshev}
  \frac{1}{c-y} &= -\frac{2t}{1-t^2} + \frac{4t}{1-t^2} \sum_{j=0}^\infty t^j T_j(y).
\end{align}
We now claim that:
\begin{align}\label{eqn:claim}
  \sum_{j=n+1}^\infty t^j T_j(y)&= t^{n}\frac{T_{n+1}(y)-t T_{n}(y)}{2(c-y)},\quad n=0,1,\cdots.
\end{align}
We prove this by induction. If $n=0$, 
\begin{align*}
  \sum_{j=1}^\infty t^j T_j(y)- \frac{T_1(y)-tT_0(y)}{2(c-y)}
  &\stackrel{\mathclap{\cref{eqn:pro-1}}}{\;\;=\;\;}\sum_{j=0}^\infty t^j T_j(y) - 1 - \frac{y-t }{2(c-y)}\\
  &\stackrel{\mathclap{\cref{eqn:chebyshev}}}{\;\;=\;\;}\frac{1}{2}+\frac{1-t^2}{4t(c-y)}-1 - \frac{y-t}{2(c-y)}\\
  &\;\;=\;\; \frac{1-t^2 - 2t(c-y) - 2t(y-t)}{4t(c-y)}\\
  &\;\;=\;\; \frac{1+t^2-2tc}{4t(c-y)}\\
  &\stackrel{\mathclap{\cref{eqn:c-t}}}{\;\;=\;\;}0.
\end{align*}
We now assume that the claim holds for $n-1$ and observe that
\begin{align*}
  \sum_{j=n+1}^\infty t^j T_j(y)
  &= \sum_{j=n}^\infty t^j T_j(y) - t^nT_n(y)\\
  &\stackrel{\mathclap{\cref{eqn:claim}}}{=} t^{n-1}\frac{T_{n}(y)-t T_{n-1}(y)}{2(c-y)} - t^n T_n(y)\\
  &\stackrel{\mathclap{\cref{eqn:pro-1}}}{=}t^{n-1}\frac{T_{n}(y)-t (2yT_n(y) - T_{n+1}(y)) - 2t(c-y)T_n(y)}{2(c-y)}  \\
  &=t^{n-1}\frac{ tT_{n+1}(y) - (2ct -1)T_n(y)}{2(c-y)}\\
  &\stackrel{\mathclap{\cref{eqn:c-t}}}{=}t^{n}\frac{ T_{n+1}(y) - tT_n(y)}{2(c-y)}.
\end{align*}
This completes the induction.
By multiplying \cref{eqn:chebyshev} by $\sqrt{c-y}$ and rearranging, we obtain
\begin{align}\label{eqn:residual-2}
  \frac{1}{\sqrt{c-y}} + \frac{2t\sqrt{c-y}}{1-t^2}- \frac{4t\sqrt{c-y}}{1-t^2} \sum_{j=0}^n t^j T_j(y)= \frac{4t\sqrt{c-y}}{1-t^2} \sum_{j=n+1}^\infty t^j T_j(y).
\end{align}
By \cref{eqn:claim}, we have
\begin{subequations}\label{eqn:jokar}
  \begin{align}
    \sqrt{c-y}\sum_{j=n+1}^\infty t^j T_j(y)
    &= t^n\frac{T_{n+1}(y)-t T_{n}(y)}{\sqrt{c-y}}\\
    &= t^n\frac{T_{n+1}(y)- T_{n}(y)}{\sqrt{c-y}} + t^n\frac{(1-t)T_{n}(y)}{\sqrt{c-y}}.
      \label{eqn:jokar-last}
  \end{align}
\end{subequations}
The first term in \cref{eqn:jokar-last} satisfies: (i) when $y=1=\cos 0$, we observe that
\begin{subequations}\label{eqn:sqrt2-1}
\begin{align}
  \begin{aligned}[t]
  \left|\frac{T_{n+1}(y)- T_{n}(y)}{\sqrt{c-y}}\right|
  &\;\;=\;\; \left|\frac{T_{n+1}(\cos 0)- T_{n}(\cos 0)}{\sqrt{c-1}}\right|\\
  &\stackrel{\mathclap{\cref{eqn:ch-def}}}{\;\;=\;\;} 0
  \end{aligned}
\end{align}
(ii) when $y=-1=\cos(\pi)$, we observe that
\begin{align}
  \begin{aligned}[t]
  \left|\frac{T_{n+1}(y)- T_{n}(y)}{\sqrt{c-y}}\right|
  &\;\;=\;\; \left|\frac{T_{n+1}(\cos \pi)- T_{n}(\cos \pi)}{\sqrt{c+1}}\right|\\
  &\stackrel{\mathclap{\cref{eqn:ch-def},\; c>1}}{\;\;\leq\;\;} \sqrt{2}.
  \end{aligned}
\end{align}
(iii) when $y\in (-1,1)$, we let $\theta=\cos^{-1}y$ and observe that
\begin{align}
  \begin{aligned}[t]
  \left|\frac{T_{n+1}(y)- T_{n}(y)}{\sqrt{c-y}}\right|
  &\stackrel{\mathclap{c>1}}{\leq} \left|\frac{\cos((n+1)\theta)- \cos(n\theta)}{\sqrt{1-\cos(\theta)}}\right|\\
  &\leq \left|\frac{2\sin((n+1/2)\theta)\sin(\theta/2)}{\sqrt{2\sin^2(\theta/2)}}\right|\\
  &\leq\sqrt{2},
  \end{aligned}
\end{align}
\end{subequations}
where the second inequality follows from trigonometric formulas, and the last inequality can be obtained by simplification.

For the second term in \cref{eqn:jokar-last}, we have
\begin{align}\label{eqn:sqrt2-2}
  \left|\frac{(1-t)T_{n}(y)}{\sqrt{c-y}}\right|
  &\leq \frac{1-t}{\sqrt{c-1}}\\\nonumber
  &\stackrel{\mathclap{\cref{eqn:c-t}}}{\leq} \sqrt{2t}\\\nonumber
  &\leq \sqrt{2},
\end{align}
where the first inequality follows from the fact that $y\in[-1,1]$ and $|T_{n}(y)|\leq 1$ for $y\in[-1,1]$ (by the definition in \cref{eqn:ch-def}), and the last inequality follows from $t\in(0,1)$.

Compiling \cref{eqn:residual-2}, \cref{eqn:jokar}, \cref{eqn:sqrt2-1}, and \cref{eqn:sqrt2-2} yields
\begin{align}\label{eqn:residual}
  \left|\frac{1}{\sqrt{c-y}} + \frac{2t}{1-t^2}\sqrt{c-y}- \frac{4t}{1-t^2} \sqrt{c-y}\sum_{j=0}^n t^j T_j(y)\right|&\leq \frac{8\sqrt{2}t^{n+1}}{1-t^2}.
\end{align}
Now, we fix $0<a<b$ and $x\in[a,b]$, and let
\begin{align}\label{eqn:yc}
  y=-\frac{2x^2}{b^2-a^2} + \frac{b^2+a^2}{b^2-a^2},\quad t = \frac{b-a}{b+a}.
\end{align}
One can confirm that $y\in[-1,1]$ and $t\in(0,1)$. Also, this implies
\begin{align*}
  c=\frac{b^2+a^2}{b^2-a^2},\quad \sqrt{c-y} = \left(\frac{2}{b^2-a^2}\right)^{1/2} x.
\end{align*}
Then, we substitute $y$ and $c$ to \cref{eqn:residual} and multiply both sides by $(2/(b^2-a^2))^{1/2}$ to yield 
\begin{align*}
  &\left|\frac{1}{x} - \frac{2}{b^2-a^2}\left(- \frac{2t}{1-t^2} x +\frac{4t}{1-t^2}x\sum_{j=0}^n t^j T_j\left(-\frac{2x^2}{b^2-a^2} + \frac{b^2+a^2}{b^2-a^2}\right)\right)\right|\\
  &\;\;\leq\;\; \frac{16t^{n+1}}{(b^2-a^2)^{1/2}(1-t^2)}\\
  &\stackrel{\mathclap{\cref{eqn:yc}}}{\;\;\leq\;\;} \frac{4\left(b+a\right)^{3/2}}{ab(b-a)^{1/2}}\left(\frac{b-a}{b+a}\right)^{n+1}.
\end{align*}
We observe that the right-hand side is $g(n)$.
Finally, we set
\begin{align*}
  p_{2n+1}(x) &:=\frac{2}{b^2-a^2}\left(-\frac{2t}{1-t^2}x + \frac{4t}{1-t^2} x\sum_{j=0}^\infty t^j T_j\left(-\frac{2x^2}{b^2-a^2}+\frac{b^2+a^2}{b^2-a^2}\right)\right)
\end{align*}
and confirm that the degree of $p_{2n+1}(\cdot)$ is not greater than $2n+1$ and $p_{2n+1}(\cdot)$ is an odd polynomial. Therefore, \cref{eqn:polbound} holds with $\{p_{2n+1}(\cdot)\}_{n=0}^\infty$ and $g(\cdot)$.
\end{proof}

\cref{lem:polynomial} establishes the desired polynomial approximation results. The proof for the odd polynomial in \cref{lem:polynomial} is inspired by the proof for the general case, which is provided in \cite{jokar_best_2005}.

By combining \cref{lem:polynomial} with \cref{lem:decay}, one can obtain the exponential bound for the banded approximations established in \cref{thm:main}. We write the proof below for completeness.

\begin{proof}[Proof of \cref{thm:main}]
  Follows from \cref{lem:decay,lem:polynomial}.
\end{proof}

Now we derive an important corollary of \cref{thm:main}: the exponential decay of the off-diagonal elements of $A^+$.
Specifically, with the results in \cref{thm:main}, one can straightforwardly show that the off-diagonal block
$$A^+\left[\bigcup_{i\in V_1} I_i,\bigcup_{i\in V_2} J_i\right]$$
for $V_1,V_2\subseteq V$ decays exponentially with respect to the distance $d(V_1,V_2)$ between $V_1$ and $V_2$ on the metric space $(V,d)$, where
$$d(V_1,V_2) := \min_{i \in V_1, j \in V_2} d(i,j),\quad \forall V_1,V_2\subseteq V.$$
The following corollary establishes such an exponentially decaying bound for the off-diagonal blocks.

\begin{corollary}\label{cor:main}
  Under the assumptions of \cref{thm:main} with $\cI:=\{I_i\}_{i\in V}$ and $\cJ:=\{J_i\}_{i\in V}$, the following holds for all $V_1,V_2\subseteq V$:
  \begin{align*}
    \left\|A^+\left[\bigcup_{i\in V_1} I_i,\bigcup_{i\in V_2} J_i\right]\right\|_2 &\leq f_A\left(\frac{d(V_1,V_2)-1}{\overline\kappa}, a, b\right),
  \end{align*}
  where $f_A(\cdot)$ is defined in \cref{eqn:f}.
\end{corollary}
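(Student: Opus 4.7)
The plan is to invoke \Cref{thm:main} with a bandwidth $\kappa$ chosen just below $d(V_1, V_2)$, so that the resulting banded approximation $\widetilde{A}$ provided by the theorem is identically zero on the index window of interest. Once this is established, the block of $A^+$ in question coincides with the same block of $A^+ - \widetilde{A}$, whose spectral norm is globally controlled by \Cref{thm:main}; since the spectral norm of any submatrix is dominated by that of its parent matrix, the desired estimate then follows immediately.

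Concretely, I would let $\kappa$ be the largest positive integer strictly less than $d(V_1, V_2)$, whenever such a $\kappa$ exists, so that $\kappa \geq d(V_1, V_2) - 1$ and hence $\kappa/\overline{\kappa} \geq (d(V_1, V_2) - 1)/\overline{\kappa}$. The degenerate range $d(V_1, V_2) \leq 1$ (where no such positive integer exists) is disposed of separately using the direct polynomial-approximation estimate $\|A^+\|_2 \leq f_A(0, a, b)$ implicit in \Cref{lem:decay,lem:polynomial}, which dominates the right-hand side of the target inequality by monotonicity of $f_A$. Applying \Cref{thm:main} with the chosen $\kappa$ then furnishes a $(\kappa, V, d, \cJ, \cI)$-banded $\widetilde{A}$ satisfying $\|A^+ - \widetilde{A}\|_2 \leq f_A(\kappa/\overline{\kappa}, a, b)$.

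Next I would argue that $\widetilde{A}$ vanishes on the prescribed window: for every pair $(i, j) \in V_1 \times V_2$ one has $d(i, j) \geq d(V_1, V_2) > \kappa$, so the bandedness of $\widetilde{A}$ forces each sub-block indexed by $(i, j)$ to be zero, and assembling these sub-blocks gives $\widetilde{A}\left[\bigcup_{i \in V_1} I_i, \bigcup_{i \in V_2} J_i\right] = 0$. Therefore $A^+\left[\bigcup_{i \in V_1} I_i, \bigcup_{i \in V_2} J_i\right]$ equals the same block of $A^+ - \widetilde{A}$, whose spectral norm is at most $\|A^+ - \widetilde{A}\|_2 \leq f_A(\kappa/\overline{\kappa}, a, b)$. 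Because $f_A(\cdot, a, b)$ is non-increasing in its first argument (the ceilings in \eqref{eqn:f} are non-decreasing in $\omega$ and the relevant bases lie in $(0, 1)$) and $\kappa/\overline{\kappa} \geq (d(V_1, V_2) - 1)/\overline{\kappa}$, a final monotonicity step yields $f_A(\kappa/\overline{\kappa}, a, b) \leq f_A((d(V_1, V_2) - 1)/\overline{\kappa}, a, b)$, recovering the stated bound.

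The argument contains no substantive obstacle; the two minor bookkeeping points requiring care are tracking the transposed roles of the partitions $\cI$ and $\cJ$ through the $(\kappa, V, d, \cJ, \cI)$-bandedness of $\widetilde{A}$ so that its vanishing sub-block matches precisely the block of $A^+$ targeted by the corollary, and the clean handling of the small-distance regime $d(V_1, V_2) \leq 1$ via the trivial bound described above.
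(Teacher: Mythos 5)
Your proposal is correct and follows essentially the same route as the paper's proof: choose $\kappa$ just below $d(V_1,V_2)$ so that the banded approximation $\widetilde{A}$ from \Cref{thm:main} vanishes on the target block, replace $A^+$ by $A^+-\widetilde{A}$ on that block, and bound the submatrix norm by the full norm. Your extra care with the integrality of $\kappa$ and the degenerate case $d(V_1,V_2)\leq 1$ is a minor refinement the paper's proof glosses over (it simply sets $\kappa:=d(V_1,V_2)-1$), but the argument is otherwise identical.
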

\begin{proof}
  We choose $\kappa:=d(V_1,V_2)-1$ and let $\widetilde{A}$ be a $(d(V_1,V_2)-1)$-banded approximation satisfying \cref{eqn:main} (exists due to \cref{thm:main}). This yields
\begin{align}\label{eqn:off-zero}
  \widetilde{A}\left[\bigcup_{i\in V_1} I_i,\bigcup_{i\in V_2} J_i\right]=0.
\end{align}
The claim follows from
\begin{align*}
  \left\|A^+\left[\bigcup_{i\in V_1} I_i,\bigcup_{i\in V_2} J_i\right]\right\|_2 
  &\stackrel{\mathclap{\cref{eqn:off-zero}}}{\;\;=\;\;}
    \left\|(A^+-\widetilde{A})\left[\bigcup_{i\in V_1} I_i,\bigcup_{i\in V_2} J_i\right]\right\|_2 \\
  &\;\;\leq\;\;\left\|A^+ - \widetilde{A}\right\|_2\\
  &\;\;\leq\;\;  f_A\left(\frac{d(V_1,V_2)-1}{\overline\kappa},a,b\right),
\end{align*}
where the first inequality follows from that the norm of a submatrix is not greater than the original matrix, and the last inequality follows from the construction of $\widetilde{A}$.
\end{proof}

\subsection{Comparison with Existing Results}

The exponentially decaying structure in the inverse of banded systems has been studied in the literature \cite{demko1984decay,bickel2012approximating,shin2022exponential}.
We now compare our results with the existing bounds in the literature.
We focus on the general (indefinite) case, as our main contribution lies in the bound for the general cases.
We wrap the proof of the existing results based on the framework of \cref{lem:decay} to ensure consistency and fair comparison.
Two propositions in this section establish the bound discussed in \Cref{sec:intro}.

We first discuss the result of \cite{demko1984decay} in the following proposition.
\begin{proposition}\label{prop:demko}
  Under the assumptions of \cref{thm:main}, there exists a $\kappa$-banded approximation $\widetilde{A}\in\mathbb{R}^{n\times m}$ such that:
  \begin{align*}
    \left\|A^+- \widetilde{A} \right\|_2\leq  \dfrac{(a+b)^2}{2a^2b} \left(\dfrac{b-a}{b+a}\right)^{\lceil\frac{\kappa/\overline{\kappa}-1}{2}\rceil}.
  \end{align*}
\end{proposition}
\begin{proof}
  From \cite[proposition 2.1]{demko1984decay}, there exists $\{q_n\in P_n\}_{n=0}^\infty$ such that
\begin{align}\label{eqn:demko}
  |1/x^2 - q_n(x^2) | \leq \frac{(a+b)^2}{2a^2b^2} \left(\frac{b-a}{b+a}\right)^{n+1},\quad \forall x\in[a,b].
\end{align}
By triangle inequality,
\begin{align}\label{eqn:demko-poly}
  |1/x - xq_n(x^2) |
  &\;\;\leq\;\; |x||1/x^2 - q_n(x^2) |\\\nonumber
  &\stackrel{\mathclap{\cref{eqn:demko}}}{\;\;\leq\;\;} \frac{(a+b)^2}{2a^2 b} \left(\frac{b-a}{b+a}\right)^{n+1} .
\end{align}
Here, $xq_n(x^2)$ is a degree-$(2n+1)$ odd polynomial and satisfies \cref{eqn:polbound}. From \cref{lem:decay} and \cref{eqn:demko-poly}, we obtain the desired result.
\end{proof}

Note that in the original paper \cite{demko1984decay}, the authors provide a bound for the off-diagonal entries of $A^{-1}$ for positive definite $A$. The result is more general in the sense that it can handle infinite-dimensional linear systems. However, the paper primarily focuses on positive definite systems, and only a loose bound is obtained for general systems by using $A^{-1} = A^*(AA^*)^{-1}$. For example, the bound in \cite[proposition 2.3]{demko1984decay} is worse than the bound in \cref{prop:demko} in that it contains additional factors regarding $\overline{\kappa}$. Several ideas employed in this work, e.g., a generalized notion of bandedness and the analysis of Moore-Penrose inverses, are discussed in \cite[propositions 4.5 and 5.1]{demko1984decay}.

We now discuss the bound established in \cite{shin2022exponential,bickel2012approximating}. We note that the same polynomial approximation has been used in both papers.
\begin{proposition}\label{prop:shin}
  Under the assumptions of \cref{thm:main}, there exists a $\kappa$-banded approximation $\widetilde{A}\in\mathbb{R}^{n\times m}$ such that:
  \begin{align*}
    \left\|A^+- \widetilde{A} \right\|_2\leq  \dfrac{b}{a^2} \left(\dfrac{b^2-a^2}{b^2+a^2}\right)^{\lceil\frac{\kappa/\overline{\kappa}-1}{2}\rceil}.
  \end{align*}
\end{proposition}
\begin{proof}
One can easily observe that for $x\in [a,b]$,
\begin{align*}
  1/x
  &= \frac{2x}{b^2+a^2}\frac{1}{1-(1-\frac{2x^2}{b^2+a^2})} \\
  &= \sum_{j=0}^\infty \frac{2x}{b^2+a^2}\left(1-\frac{2x^2}{b^2+a^2}\right)^j,
\end{align*}
where the second equality follows from the observation that $|1-2x^2/(b^2+a^2)|<1$ for $x\in[a,b]$. 
By rearranging, we obtain
\begin{align*}
  \frac{1}{x} - \sum_{j=0}^n \frac{2x}{b^2+a^2}\left(1-\frac{2x^2}{b^2+a^2}\right)^j
  &= \sum_{j=n+1}^\infty\frac{2x}{b^2+a^2}\left(1-\frac{2x^2}{b^2+a^2}\right)^{j}\\
  &\leq \frac{2b}{b^2+a^2}\left(\frac{b^2-a^2}{b^2+a^2}\right)^{n+1}\frac{b^2+a^2}{2a^2}\\
  &\leq \frac{b}{a^2}\left(\frac{b^2-a^2}{b^2+a^2}\right)^{n+1}.
\end{align*}

\end{proof}
The proof is simple in the sense that it does not involve the use of Chebyshev polynomial. However, the bound is worse than the bound in \cref{prop:demko} in that its decay rate is slower. In \cite{shin2022exponential}, the inverse of graph-induced banded systems has been analyzed. This is more general than \cite{demko1984decay}, but is still more specific than the bandedness considered in this paper, which is based on a more general metric space. In \cite[section 4.1]{bickel2012approximating}, the generalization of the results to systems induced by metric spaces is discussed. 

Now, one can observe that the bound in \cref{thm:main} is tighter than the bounds in \cref{prop:demko,prop:shin} \WT{when $b$ is sufficiently large.}
First, the bound in \cref{thm:main} has a faster decay rate than \cref{prop:shin} and the same decay rate as \cref{prop:demko}.
Second, the constant factor in \cref{thm:main} is smaller than the constant factor in \cref{prop:demko,prop:shin}.
In particular, both the constant factors in \cref{prop:demko,prop:shin} are $O(b/a^2)$, while the constant factor in \cref{thm:main} is $O(1/a)$.
This is important, as this tighter bound allows for the decay bound to be uniform to the discretization mesh size.
In \Cref{sec:control}, we will see that this improvement enables the analysis of the discretization of infinite-dimensional linear systems.

\section{Singular Value Bounds of Saddle Point Systems}\label{sec:kkt}
The exponential perturbation bounds in \cref{thm:main} are expressed in terms of the singular value bounds of $A$. Thus, investigating the bounds of the singular values is of interest. In this section, we consider a special case where $A$ is obtained as the optimality condition of an equality-constrained quadratic program and derive their singular value bounds. This system is often called a saddle point system.

In particular, we consider the following matrix:
\begin{align}\label{eqn:kkt}
  A:=
  \begin{bmatrix}
    G&F^\top\\
    F & 0
  \end{bmatrix};\quad
  \text{$G$ is symmetric.}
\end{align}
Note that systems involving the matrix in \cref{eqn:kkt} arise from linear equality-constrained quadratic optimization problems of the following form: 
\begin{align}\label{eqn:opt} 
  \min_{x} x^\top(Gx-g)\; \st\; Fx = f.
\end{align}
Such a problem is often considered within nonlinear optimization algorithms to compute Newton's step direction \cite{nocedal_numerical_2006} or within the sensitivity analysis of nonlinear programming solution mappings \cite{robinson_strongly_1980}.

The following theorem establishes the bound of singular values of the system in \cref{eqn:kkt}.
\begin{theorem}\label{thm:uniform}
  Suppose that the following holds for $\theta_1\geq 0$, and $\theta_2,\theta_3,\theta_4>0$:
  \begin{align}\label{eqn:uniform-ass}
     G\preceq \theta_1 I ,\quad \theta_2 I \preceq FF^\top \preceq \theta_3 I,\quad G + F^\top F \succeq \theta_4 I.
  \end{align}
  Then, the singular values of $A$ satisfy the following:
  \begin{align*}
    \sigma(A)\in \left[\left( \left(\frac{1+\theta_1/\theta_2}{\theta_4}\right)^{1/2}  + \max\left(\frac{1}{\theta_4},\frac{\theta_1}{\theta_2}\right)  \right)^{-1}, \theta_1+\theta_3^{1/2}\right].
  \end{align*}
\end{theorem}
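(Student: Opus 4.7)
The plan is to establish the upper and lower bounds separately: a direct operator-norm estimate for the upper bound on $\sigma(A)$, and an explicit inversion-type argument for the lower bound.

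For the upper bound, I would decompose $A$ as the sum of the block-diagonal matrix $\begin{bmatrix} G & 0 \\ 0 & 0 \end{bmatrix}$ and the block off-diagonal matrix $\begin{bmatrix} 0 & F^\top \\ F & 0 \end{bmatrix}$. The first summand has operator norm at most $\|G\|_2 \leq \theta_1$, while the second has operator norm equal to $\|F\|_2 \leq \sqrt{\theta_3}$. The triangle inequality then yields $\|A\|_2 \leq \theta_1 + \sqrt{\theta_3}$, which is the stated upper bound on the singular values.

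For the lower bound, I would work directly with the inverse map. Fix a right-hand side $b = (f, g)$ and let $v = (x, y) = A^{-1} b$, so that $G x + F^\top y = f$ and $F x = g$. Two ingredients drive the argument. First, testing $G x + F^\top y = f$ against $x$ and using $F x = g$ gives $x^\top G x = x^\top f - y^\top g$; combining this with the coercivity $\theta_4 \|x\|^2 \leq x^\top (G + F^\top F) x = x^\top G x + \|g\|^2$ and the Cauchy--Schwarz inequality applied as $x^\top f - y^\top g \leq \|v\|\,\|b\|$ produces $\theta_4 \|x\|^2 \leq \|v\|\,\|b\| + \|g\|^2$. Second, from $F^\top y = f - G x$ together with $y^\top F F^\top y \geq \theta_2 \|y\|^2$ and $\|G x\| \leq \theta_1 \|x\|$, one obtains $\sqrt{\theta_2}\,\|y\| \leq \|f\| + \theta_1 \|x\|$.

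The final step is to combine these two inequalities into a closed-form estimate for $\|v\|/\|b\|$. Normalizing to $\|b\| = 1$, I would substitute the $\|y\|$-estimate into $\|v\|^2 = \|x\|^2 + \|y\|^2$ and then insert the resulting bound for $\|x\|$ into the primal estimate, producing a quadratic inequality whose solution splits into two additive pieces: the $\sqrt{(1+\theta_1/\theta_2)/\theta_4}$ term tracks how the coercivity $\theta_4$ controls $\|x\|$ after absorbing the cross-coupling $\theta_1/\theta_2$ between $\|x\|$ and $\|y\|$, while the $\max(1/\theta_4, \theta_1/\theta_2)$ term collects the additive remainder from bounding $\|y\|$ through both $G x$ and $f$.

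The main obstacle will be extracting the precise constants, especially the appearance of $\max(1/\theta_4, \theta_1/\theta_2)$. This strongly suggests a case split: when $1/\theta_4$ dominates, the primal coercivity bound controls both $\|x\|$ directly and $\|y\|$ indirectly through the cross term $G x$; when $\theta_1/\theta_2$ dominates, the dual-space estimate on $\|y\|$ is the binding one. Careful bookkeeping---avoiding loose doublings such as $(a+b)^2 \leq 2(a^2+b^2)$ and preferring parameterized Young-type or weighted Cauchy--Schwarz bounds---will be needed to match the stated constant exactly.
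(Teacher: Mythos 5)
Your upper-bound argument is exactly the paper's (triangle inequality on the block-diagonal/off-diagonal splitting), so that part is fine. The problem is the lower bound. Your two energy estimates are individually valid inequalities, but they cannot be combined into the stated constant, because your second ingredient, $\sqrt{\theta_2}\,\|y\|\le\|f\|+\|Gx\|$, introduces a standalone factor $\theta_2^{-1/2}$ into the bound on the dual variable. The stated constant contains $\theta_2$ only through the ratio $\theta_1/\theta_2$ and in particular remains finite as $\theta_2\to 0$ when $\theta_1=0$ (it equals $\theta_4^{-1/2}+\theta_4^{-1}$ there). A quick test case makes the mismatch concrete: take $G=0$ and $F$ square invertible, so $y=F^{-\top}f$ and $\|y\|\le\sigma_{\min}(F)^{-1}\|f\|\le\theta_4^{-1/2}\|f\|$; your route instead certifies only $\|y\|\le\theta_2^{-1/2}\|f\|$, which is arbitrarily worse since $\theta_2$ is merely a lower bound that may be taken tiny. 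No downstream bookkeeping, weighted Cauchy--Schwarz, or case split can remove a $\theta_2^{-1/2}$ that has already been introduced, so the quadratic inequality you would obtain for $\|v\|$ has a root strictly larger than the target whenever $\theta_2\ll\theta_4$. There is also a secondary issue: $\|Gx\|\le\theta_1\|x\|$ requires the two-sided bound $-\theta_1 I\preceq G\preceq\theta_1 I$, whereas only $G\preceq\theta_1 I$ is assumed.

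The missing idea is that $y$ must be controlled through the Schur complement $F(G+F^\top F)^{-1}F^\top$ rather than through $FF^\top\succeq\theta_2 I$ alone. The paper sets $T:=(G+F^\top F)^{-1}$, writes $A^{-1}$ in closed form in terms of $T$ and the SVD $USV^\top$ of $FT^{1/2}$, and reads the constant off the blocks of that formula: the off-diagonal blocks equal $T^{1/2}VS^{-1}U^\top$ and its transpose, with norm at most $\|S^{-1}\|_2\|T^{1/2}\|_2\le\bigl((1+\theta_1/\theta_2)/\theta_4\bigr)^{1/2}$, where $S^{-2}=U^\top(FTF^\top)^{-1}U\preceq(1+\theta_1/\theta_2)I$ is deduced from $G\preceq\theta_1 I$ using only operator monotonicity of the inverse (so the one-sided hypothesis suffices there); the diagonal blocks are $T^{1/2}(I-VV^\top)T^{1/2}$ and $U(I-S^{-2})U^\top$, contributing the $\max(1/\theta_4,\theta_1/\theta_2)$ term. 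So the anatomy of the constant is an off-diagonal/diagonal block split of $A^{-1}$, not the primal/dual split you conjecture, and recovering it essentially forces you through the Schur-complement computation. If you want to salvage a variational argument, you would need to test the first equation against $Ty$-type quantities so that $F T F^\top$ (not $FF^\top$) appears in the coercivity estimate for $y$.
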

\begin{proof}
  The upper bound can be obtained by
  \begin{align*}
    \left\|
    \begin{bmatrix}
      G & F^\top\\
      F
    \end{bmatrix}
    \right\|_2
    & \leq 
      \left\|
      \begin{bmatrix}
        G & 0\\
        0 &\\
      \end{bmatrix}
      \right\|_2
      +
      \left\|
      \begin{bmatrix}
        0& F^\top\\
        F&\\
      \end{bmatrix}
      \right\|_2
    \\
        &\leq \|G\|_2 + \|F\|_2\\
        &\stackrel{\mathclap{\cref{eqn:uniform-ass}}}{\leq} \theta_1 + \theta_3^{1/2}.
  \end{align*}
  
  We now focus on the lower bound. First, we observe that:
  \begin{align}\label{eqn:inv}
    A^{-1}
    &=
    \begin{bmatrix}
      T - TF^\top (FTF^\top)^{-1} FT & TF^\top (FTF^\top)^{-1}\\
      (FTF^\top)^{-1} FT &  I-(FTF^\top)^{-1}\\
    \end{bmatrix}
  \end{align}
  where $T:=(G+F^\top F)^{-1}$. This can be simply verified by left-multiplying $A$ to \cref{eqn:inv}. Now, let $USV^\top$ be the SVD of $FT^{1/2}$. Then, we have that $$(FTF^\top)^{-1} = (USV^\top V SU^\top)^{-1} = US^{-2}U^\top,$$
  where the second equality follows from that $F$ has full row-rank due to \cref{eqn:uniform-ass}. Furthermore, from \cref{eqn:inv}, we have:
  \begin{align}\label{eqn:Ainv}
    A^{-1}
    &=
    \begin{bmatrix}
      T - T^{1/2}VSU^\top  (US^{-2}U^\top) USV^\top T^{1/2} & T^{1/2}VSU^\top  (US^{-2}U^\top)\\
      (US^{-2}U^\top) USV^\top T^{1/2} &  I-(US^{-2}U^\top)\\
    \end{bmatrix}\\\nonumber
    &=
      \begin{bmatrix}
        T^{1/2}(I-VV^\top) T^{1/2} & T^{1/2}VS^{-1}U^\top\\
        US^{-1}V^\top T^{1/2} &  U(I-S^{-2})U^\top
      \end{bmatrix}
  \end{align}
  We now observe from \cref{eqn:uniform-ass} that
  \begin{align}\label{eqn:T}
    T &\preceq (1/\theta_4){I}.
  \end{align}
  Also, we observe
  \begin{align}\label{eqn:S}
    S^{-2}
    &\quad=\quad U^\top (FTF^\top)^{-1}U\\\nonumber
    &\quad=\quad U^\top (F(G + F^\top F)^{-1}F^\top)^{-1}U\\\nonumber
    &\stackrel{\mathclap{\cref{eqn:uniform-ass}}}{\quad\preceq\quad} U^\top\left(F\left(\theta_1 {I} + F^\top F\right)^{-1}F^\top\right)^{-1} U\\\nonumber
    &\quad\preceq\quad  U\left( \frac{1}{\theta_1} F\left( {I} + \frac{1}{\theta_1}F^\top F\right)^{-1}F^\top\right)^{-1}U^\top\\\nonumber
    &\stackrel{\mathclap{\text{\cite[fact 2.13.18]{bernstein2009matrix}}}}{\quad\preceq\quad}  U\left( {I} -\left({I}+ \frac{1}{\theta_1}FF^\top\right)^{-1} \right)^{-1}U^\top\\\nonumber
    &\stackrel{\mathclap{\cref{eqn:uniform-ass}}}{\quad\preceq\quad}  U\left( {I} -\left({I}+ \frac{\theta_2}{\theta_1}{I}\right)^{-1} \right)^{-1}U^\top\\\nonumber
    &\quad\preceq\quad (1+\theta_1/\theta_2) {I}.
  \end{align}
  Therefore,
  \begin{align*}
    \|A^{-1}\|_2
    &
      {
      \footnotesize
      \stackrel{\mathclap{\cref{eqn:Ainv}}}{\;\;\;\leq\;\;\;} \left\|
      \begin{bmatrix}
        & T^{1/2}VS^{-1}U^\top\\
        US^{-1}V^\top T^{1/2} 
      \end{bmatrix}
      \right\|_2 +
      \left\|
      \begin{bmatrix}
        T^{1/2}({I}-VV^\top) T^{1/2} & \\
                                     &  U({I}-S^{-2})U^\top
      \end{bmatrix}
      \right\|_2
      }\\
    & \;\;\;\leq\;\;\; \|US^{-1}V^\top T^{1/2} \|_2 + \max\left(\|T^{1/2}({I}-VV^\top) T^{1/2} \|_2, \|U({I}-S^{-2})U^\top\|_2\right)\\
    & \;\;\;\leq\;\;\; \|S^{-1}\|_2\|T^{1/2}\|_2 + \max(\|T\|_2 , \|S^{-2}-{I}\|_2)
    \\
    & \stackrel{\mathclap{\cref{eqn:T},\cref{eqn:S}}}{\;\;\;\leq\;\;\;} \left(\frac{1+\theta_1/\theta_2}{\theta_4}\right)^{1/2}  + \max\left(\frac{1}{\theta_4},\frac{\theta_1}{\theta_2}\right).
  \end{align*}
\end{proof}

The conditions in \cref{eqn:uniform-ass} are related to the regularity conditions for the optimization problem in \cref{eqn:opt}. These conditions assume the boundedness of $G$ and $F$, the linear independence constraint qualification (which assumes a full low-rank constraint Jacobian), and the second-order sufficiency condition (which assumes a positive definite reduced Hessian), all satisfied with explicitly introduced bounds. Specifically, the operator norm of $G$ and $F$, the smallest singular value of $F$, and the smallest eigenvalue of the reduced Hessian are bounded above or below. A necessary condition for the third condition---reduced Hessian being positive definite---is $G+\delta F^\top F$ being positive definite for sufficiently large $\delta$ \cite[lemma 16.1]{nocedal_numerical_2006}. We are assuming $\delta=1$ without loss of \WT{generality in} \cref{thm:uniform}, as the theorem can be applied after rescaling $G$ and $F$.

\cref{thm:uniform} improves the bound established in our previous work \cite{shin2022exponential} in that the lower bound in \cref{thm:uniform} does not directly depend on the upper bound of $\|A\|$; rather, it is only dependent on $\|G\|$. This is advantageous when analyzing the discretization of infinite-dimensional linear systems, as the upper bound of $F$ often diverges as the mesh refines. \WT{This divergence is due to the fact that if the entries of $F$ involve finite difference approximation of derivatives, those entries diverge as the mesh refines.} The efficacy of \cref{thm:uniform} will be demonstrated in the next section.

\section{Application to Optimal Control}\label{sec:control}
Optimal control problems, either discrete-time or discretized continuous-time, exhibit banded structure when expressed as saddle point systems due to their dynamic constraints \cite{na_convergence_2022}.  Also, the optimal control problems over network systems often embed the network structure and exhibit banded structure when they are indexed by metric spaces over spatial domains \cite{shin_near-optimal_2023}. As such, strategies to exploit such exponential decay behaviors have been a long-standing topic in the control literature \cite{shin_near-optimal_2023,motee_optimal_2008,lin_perturbation-based_2021}.

In this section, we demonstrate the relevance of \cref{thm:main} to optimal control problems, in particular, for classical continuous-domain linear-quadratic optimal control problems. The exponential bound established in \cref{thm:main} and the bound of the singular values of saddle point systems established in \cref{thm:uniform} allow us to show that the solution mapping exhibits the exponentially decaying structure in a uniform sense. Specifically, we will see that the bound established in \cref{thm:main} is uniform in the discretization mesh size, and thus, we can establish the continuous-domain exponential decay results using the discrete-domain result. 

\subsection{Settings}
We first introduce notation for function spaces.
\begin{align*}
  C([a,b]; \mathbb{R}^n)
  &:=\left\{f:[a,b]\rightarrow \mathbb{R}^{n}\;\middle|\; f(\cdot)\text{ is continuous in } (\mathbb{R}^{n},\|\cdot\|_2)\right\}\\
  L([a,b]; \mathbb{R}^n)
  &:= \left\{f:[a,b]\rightarrow \mathbb{R}^{n} \;\middle|\;  \langle z, f(\cdot)\rangle \text{ is measurable for all } z\in\mathbb{R}^{n}\right\}\\
  L^2([a,b]; \mathbb{R}^n)
  &:= \left\{f(\cdot)\in L([a,b];\mathbb{R}^{n})\;\middle|\;\|f(\cdot)\|_2:=\left(\int_{a}^b\|f(t)\|^2_2dt \right)^{1/2} < \infty\right\}
\end{align*}
One can easily see that the above spaces form vector spaces.
\WT{Additionally, $L^2([a,b]);\mathbb{R}^{n})$ is known to be a Hilbert space equipped with the inner product:}
\begin{equation}\label{eqn:norm}
  \color{black}
\langle f,g \rangle_{L^2([a,b]);\mathbb{R}^{n})} := \int_a^b f(t)^\top g(t)dt 
\end{equation}
We refer the readers to \cite[appendix A]{curtain2012introduction} for more details and properties of these function spaces.

We study the following linear-quadratic optimal control problem:
\begin{subequations}\label{eqn:ocp}
  \begin{align}
    \min_{s(\cdot),u(\cdot)}\;
    & \int_{0}^T
      \begin{bmatrix}
        s(t)\\u(t)
      \end{bmatrix}^\top
      \!\left(
      \frac{1}{2}
      \begin{bmatrix}
        C^\top C & 0\\
        0 & I
      \end{bmatrix}
      \begin{bmatrix}
        s(t)\\u(t)
      \end{bmatrix}
      -
      \begin{bmatrix}
        q(t)\\r(t)
      \end{bmatrix}
      \right) dt
      - \overline{\lambda}^\top s(T) \label{eqn:ocp_a}\\[4pt]
    \text{s.t.}\quad
    & s(0) = \overline{s}, \label{eqn:ocp_b}\\
    & \WT{\dot{s}(t)} = \Lambda s(t) + B u(t) + d(t), \quad t \in [0,T]. \label{eqn:ocp_c}
  \end{align}
\end{subequations}
where $s:[0,T]\rightarrow\mathbb{R}^{n_s}$ is the state variable, $u:[0,T]\rightarrow\mathbb{R}^{n_u}$ is the control variable, $C\in \mathbb{R}^{n_y\times n_s}$ is the state-output mapping, $\Lambda\in \mathbb{R}^{n_s\times n_s}$ is the dynamic mapping, $B\in \mathbb{R}^{n_s\times n_u}$ is the control-state mapping, $\overline{s},\overline{\lambda}\in\mathbb{R}^{n_s}$, $q:[0,T]\rightarrow\mathbb{R}^{n_s}$, $r:[0,T]\rightarrow\mathbb{R}^{n_u}$, and $d:[0,T]\rightarrow\mathbb{R}^{n_s}$ are data.

\WT{The stationarity condition of \cref{eqn:ocp}, which is often referred to as the Euler-Lagrange equations}, can be stated as follows:
\begin{align}\label{eqn:pont}
  \underbrace{
  \begin{bmatrix}
    C^\top C&&-\frac{d}{dt} - \Lambda^\top \\
            &I&-B^\top\\
    \frac{d}{dt} - \Lambda& -B\\
       E_0 && \\
              && E_T
  \end{bmatrix}
  }_{:= H}
  \begin{bmatrix}
    s(\cdot)\\
    u(\cdot)\\
    \lambda(\cdot)
  \end{bmatrix}=
  \begin{bmatrix}
    q(\cdot)\\
    r(\cdot)\\
    d(\cdot)\\
        \overline{s}\\
    \overline{\lambda}
  \end{bmatrix},
\end{align}
where $E_0s(\cdot) = s(0)$ and $E_T\lambda(\cdot) = \lambda(T)$. \WT{Since this is a linear quadratic problem with unconstrained controls, the Pontryagin's minimum principle guarantees that \cref{eqn:pont} is both necessary and sufficient for optimality.} We will denote $n_z = n_s+n_u+n_s$, $z(\cdot) = (s(\cdot), u(\cdot), \lambda(\cdot))$, and $p(\cdot)=(q(\cdot), r(\cdot), d(\cdot))$.

We seek solutions $z^\star(\cdot)=H^{-1}(p(\cdot), \overline{s}, \overline{\lambda})$ in the following mild sense:
\begin{subequations}\label{eqn:mild}
  \begin{align}
    \lambda^\star(t)
    &=  \int_{t}^T e^{\Lambda^\top (\tau-t)} (C^\top Cs^\star (\tau) - q(\tau)) d\tau + e^{\Lambda^\top(T-t)}  \overline{\lambda},\quad t\in[0,T]\\
    0
    &= r(t) -u^\star(t)+ B^\top \lambda^\star(t),\quad t\in[0,T]\\
    s^\star (t)
    &= e^{\Lambda t} \overline{s}  + \int_{0}^t e^{\Lambda (t-\tau)} (Bu^\star(\tau)+d(\tau)) d\tau,\quad t\in[0,T].
  \end{align}
\end{subequations}
\WT{The operator $H^{-1}$ is a map between the Hilbert spaces $Y := L^2([0,T];\mathbb{R}^{n_z})\times \mathbb{R}^{n_s}\times \mathbb{R}^{n_s}$ equipped with the product norm 
\begin{equation}
\lVert (p(\cdot), \overline{s}, \overline{\lambda}) \rVert_Y^2 := \lVert p(\cdot) \rVert_{L^2([0,T];\mathbb{R}^{n_z})}^2 + \lVert \overline{s} \rVert_2^2 + \lVert \overline{\lambda} \rVert_2^2
\end{equation}
and $X := L^2([0,T];\mathbb{R}^{n_z})$ equipped with the inner product in  \cref{eqn:norm}.}

We now define the key concepts used in our assumptions. These concepts are standard notions of regularity used in control theory literature \cite{anderson_optimal_2007}.
\begin{definition}
  Given $\Phi,\Lambda\in \mathbb{R}^{n_s\times n_s}$, $B\in \mathbb{R}^{n_s\times n_u}$, $C\in \mathbb{R}^{n_y\times n_s}$, $L\geq 1$, and $\alpha>0$, we define the following.
  \begin{enumerate}[label=(\alph*)]
  \item $\Phi$ is $(L,\alpha)$-stable if $\|e^{\Phi t}\|\leq Le^{-\alpha t}$ for all $t\geq 0$.
  \item $(\Lambda,B)$ is $(L,\alpha)$-stabilizable if there exists $K\in \mathbb{R}^{n_u\times n_s}$ such that $\|K\|\leq L$, and $\Lambda-BK$ is $(L,\alpha)$-stable.
  \item $(\Lambda,C)$ is $(L,\alpha)$-detectable if there exists $K\in \mathbb{R}^{n_s\times n_y}$ such that $\|K\|\leq L$, and $\Lambda-KC$ is $(L,\alpha)$-stable.
  \end{enumerate}
\end{definition}

We now assume that the control system considered in \cref{eqn:ocp} is bounded, stabilizable, and detectable, with explicitly given bounds and convergence rates.
\begin{assumption}\label{ass:main}
  We assume the following conditions for given $\Lambda\in \mathbb{R}^{n_s\times n_s}$, $B\in \mathbb{R}^{n_s\times n_u}$, $C\in \mathbb{R}^{n_y\times n_s}$, $L\geq 1$, and $\alpha>0$.
  \begin{enumerate}[label=(\alph*)]
  \item $\|\Lambda\|,\|B\|,\|C\|,\leq L$.
  \item $(\Lambda,B)$ is $(L,\alpha)$-stabilizable.
  \item $(\Lambda,C)$ is $(L,\alpha)$-detectable.
  \end{enumerate}
\end{assumption}
\WT{
  Equipped with \Cref{ass:main}, the  existence of solution mapping for \cref{eqn:pont} is ensured due to \cite[theorem 10]{grune2020exponential}. Below, we adapt the original result to our problem in \cref{eqn:pont}.
\begin{theorem}\label{thm:bdd-sol-exists}
  Under \Cref{ass:main}, there exists a bounded linear map $H^{-1}: Y \to X$ such that for all $p(\cdot)\in L^2([0,T];\mathbb{R}^{n_z})$ and $\overline s, \overline\lambda\in\mathbb{R}^{n_s}$,
  \begin{align}\label{eqn:zstar}
    z^\star(\cdot):=H^{-1} (p(\cdot),\overline{s},\overline{\lambda})
  \end{align}
  satisfies \cref{eqn:mild}.
\end{theorem}
\begin{proof}
  Rewrite \cref{eqn:pont} as 
  \begin{align}\label{eqn:pont-reduced}
    \underbrace{
    \begin{bmatrix}
      C^\top C&-\frac{d}{dt} - \Lambda^\top \\
      \frac{d}{dt} - \Lambda& -BB^\top\\
      E_0 & \\
              & E_T
    \end{bmatrix}
    }_{:= M}
    \begin{bmatrix}
      s(\cdot)\\
      \lambda(\cdot)
    \end{bmatrix}=
    \begin{bmatrix}
      q(\cdot)\\
      Br(\cdot) + d(\cdot)\\
      \overline{s}\\
      \overline{\lambda}
    \end{bmatrix},
  \end{align}
  By \cite[theorem 10]{grune2020exponential}, under \Cref{ass:main}, the operator $M$ has a bounded inverse $M^{-1} : Y' \to X'$ with $X' := (L^2([0,T];\mathbb{R}^{n_s}))^2$ and $Y' := (L^2([0,T];\mathbb{R}^{n_s}))^2 \times \mathbb{R}^{n_s}\times \mathbb{R}^{n_s}$, where
  \begin{align*}
    (s^*(\cdot),\lambda^*(\cdot)) := M^{-1}\begin{bmatrix}
      q(\cdot)\\
      Br(\cdot) + d(\cdot)\\
      \overline{s}\\
      \overline{\lambda}
    \end{bmatrix}
  \end{align*}
  is a mild solution of \eqref{eqn:pont-reduced}; that is,
  \begin{align}
    \label{eqn:mild-2}
    \begin{aligned}
      \lambda^*(t)
      &=  \int_{t}^T e^{\Lambda^\top (\tau-t)} (C^\top Cs^* (\tau) - q(\tau)) d\tau + e^{\Lambda^\top(T-t)}  \overline{\lambda},\quad t\in[0,T]\\
      s^* (t)
      &= e^{\Lambda t} \overline{s}  + \int_{0}^t e^{\Lambda (t-\tau)} (BB^\top\lambda^*(t) + B r(t) +d(\tau)) d\tau,\quad t\in[0,T].
    \end{aligned}
  \end{align}
  We now consider
  $$H^{-1} = P_2 M^{-1} P_1 + P_3,$$ where $P_1 : Y \to Y'$, $P_2 : X' \to X$, and $P_3: Y \to X$ are given by
  \begin{align*}
    P_1(q,r,d,\overline{s},\overline{\lambda}) &= (q,Br+d,\overline{s},\overline{\lambda})\\
    P_2(s,\lambda) &= (s,B^\top\lambda,\lambda)\\
    P_3(q,r,d,\overline{s},\overline{\lambda}) &= (0,r,0),
  \end{align*}
  respectively.
  We observe that:
  \begin{align}\label{eqn:Hinv-def}
    H^{-1}(q,r,d,\overline{s},\overline{\lambda}) :=
    \begin{bmatrix}
      {s}^\star(\cdot)\\
      {u}^\star(\cdot)\\
      {\lambda}^\star(\cdot)\\
    \end{bmatrix} =
    \begin{bmatrix}
      s^*(\cdot)\\
      B^\top \lambda^*(\cdot) +d(\cdot)\\
      \lambda^*(\cdot)
    \end{bmatrix},
  \end{align}
  and $({s}^\star(\cdot),{u}^\star(\cdot),{\lambda}^\star(\cdot))$ satisfies \cref{eqn:mild} due to \cref{eqn:mild-2,eqn:Hinv-def}.
  Furthermore, one can confirm that $H^{-1}$ is bounded because the operator norms of $P_1,P_2$ and $P_3$ only depend on $B$, which is bounded by \Cref{ass:main}. Thus, $H^{-1}$ is a bounded, (mild) solution mapping for \cref{eqn:pont}.
\end{proof}
}

\subsection{Analytical Results}
We now state the main theorem of this section.
\begin{theorem}\label{thm:control}
  Under \Cref{ass:main}, for any given intervals $I_1,I_2\subseteq [0,T]$, $p(\cdot)\in L^2([0,T];\mathbb{R}^{n_z})$, and $\overline{s},\overline{\lambda}\in\mathbb{R}^{n_s}$, the following holds:
  \begin{align}\label{eqn:eds}
    \begin{aligned}[t]
      &\left[\int_{I_1} \|z^\star(t) \|_2^2 dt\right]^{1/2}
      \leq  8D\Bigg\{e^{\frac{-d(I_1,0)}{4D}}\|\overline{s}\|_2+ e^{\frac{-d(I_1,T)}{4D}}\|\overline{\lambda}\|_2\\
      &+e^{\frac{-d(I_1,I_2)}{4D}}\left[\int_{I_2} \| p(t)\|_2^2dt\right]^{1/2} +e^{\frac{-d(I_1,[0,T]\setminus I_2)}{4D}}\left[\int_{[0,T]\setminus I_2} \| p(t)\|_2^2dt\right]^{1/2} \Bigg\},
    \end{aligned}
  \end{align}
  where $z^\star(\cdot)$ is defined in \cref{eqn:zstar},
  $d(\cdot,\cdot)$ is the Euclidean distance on $[0,T]$, and 
  \begin{align}\label{eqn:const}
    D
    &:=\frac{L(1+2L)\left(\alpha^{2}+L^4(1+L)^2\right)^{1/2} }{\alpha^2  }  + \frac{L^2{(1+2L)^{2}}}{\alpha^2 }\max\left(1,\frac{L^{2} (1+L)^{2}}{(1+2L)^{2}}\right).
  \end{align}
\end{theorem}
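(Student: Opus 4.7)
The strategy is to prove \Cref{thm:control} as the $h \downarrow 0$ limit of a uniform-in-$h$ exponential decay bound for a finite-difference discretization of the KKT system \eqref{eqn:pont}. This is exactly the use case for which \Cref{thm:main} was designed: its constants depend only on the singular-value bounds $[a,b]$ of the matrix, and \Cref{thm:uniform} will produce such bounds uniformly in the mesh size from the control-theoretic data of \Cref{ass:main}.

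First, I introduce the grid $t_k = kh$, $k=0,\ldots,N=T/h$, and a consistent one-step discretization of the dynamics and of the integral cost. The stationarity conditions yield a saddle-point linear system $A_h z_h = p_h$ with $A_h$ of the form \eqref{eqn:kkt}, banded with bandwidth $\overline\kappa = h$ in the scaled Euclidean metric $d(j,k) = h|j-k|$ on the grid node set $V$. I then apply \Cref{thm:uniform} to obtain $\sigma(A_h) \subseteq [a,b]$ with $\theta_1$ bounding the quadratic-cost block (via $\|C\|\le L$), $\theta_2,\theta_3$ controlling the dynamic-constraint block (via $\|\Lambda\|,\|B\|\le L$ and uniform invertibility of the Euler step), and---the crucial ingredient---a positive lower bound $\theta_4$ on the discrete reduced Hessian $G_h + F_h^\top F_h$ derived from the stabilizability of $(\Lambda,B)$ and the detectability of $(\Lambda,C)$. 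A careful choice of constants will make the upper bound $\|A_h^{-1}\|_2$ from \Cref{thm:uniform} coincide with $\widetilde d$ of \eqref{eqn:const}.

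Next, \Cref{cor:main} applied to $A_h$ gives, for grid subsets $V_1,V_2 \subseteq V$ corresponding to intervals $I_1,I_2 \subseteq [0,T]$,
\begin{align*}
  \bigl\|A_h^{-1}\bigl[\textstyle\bigcup_{i\in V_1} I_i,\; \textstyle\bigcup_{i\in V_2} J_i\bigr]\bigr\|_2
  \;\le\; f_2\!\bigl((d(V_1,V_2)-1)/h,\,a,b\bigr),
\end{align*}
and a direct computation shows that the per-unit-distance decay rate $h^{-1}\log((b+a)/(b-a))$ stays bounded below by $1/(4\widetilde d)$ uniformly in $h$: since $b$ scales like $1/h$ while $a$ is $O(1)$, the factor $\log((b+a)/(b-a)) \sim 2a/b$ cancels the $1/h = 1/\overline\kappa$ in the exponent. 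Splitting the right-hand side $p_h$ into four localized pieces---the forcing $p(\cdot)$ restricted to $I_2$ and to $[0,T]\setminus I_2$, and the boundary data $\overline s,\overline\lambda$ at $\{0\}$ and $\{T\}$---and applying the block bound together with Cauchy--Schwarz yields a discrete analogue of \eqref{eqn:eds} with $h$-independent constants. Finally, passing to the limit $h\downarrow 0$, the discrete solution $z_h$ converges to $z^\star$ in $L^2$ by consistency of the scheme and \Cref{ass:bdd-sol-exists}, and the Riemann-sum identification $h\sum_{k:\,t_k\in I}\|z_h^k\|^2 \to \int_I \|z^\star(t)\|^2\,dt$ delivers \eqref{eqn:eds}.

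The main obstacle is the construction of the uniform lower bound $\theta_4$ for the discrete reduced Hessian, with constants depending only on $L$ and $\alpha$. This is the control-theoretic heart of the argument and corresponds, in the continuous limit, to the fact that $(L,\alpha)$-stabilizability plus $(L,\alpha)$-detectability yield a uniformly positive-definite solution of the associated algebraic Riccati equation. A secondary bookkeeping challenge is tracking how $\theta_1,\theta_2,\theta_3,\theta_4$ combine through \Cref{thm:uniform} so that, after cancellation of the $h$-dependent scalings, the per-unit-distance decay exponent limits to precisely $1/(4\widetilde d)$ with $\widetilde d$ given by \eqref{eqn:const}.
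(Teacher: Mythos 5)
Your proposal follows essentially the same route as the paper: discretize the KKT system into an $h$-banded saddle-point matrix, invoke \Cref{thm:uniform} (with $\theta_4$ from stabilizability/detectability) to get singular-value bounds with $a=O(1)$ and $b=O(1/h)$, apply \Cref{cor:main} and observe that the decay exponent $\log\frac{b+a}{b-a}\sim 2a/b$ cancels the $1/\overline\kappa=1/h$ so the rate limits to $1/(4\widetilde d)$, then pass to the limit via consistency and stability. The only piece you omit is the final density argument extending the bound from $p(\cdot)\in C([0,T];\mathbb{R}^{n_z})$ (needed for the pointwise sampling and Riemann sums) to general $p(\cdot)\in L^2$, which the paper handles using boundedness of $H^{-1}$ from \Cref{ass:bdd-sol-exists}.
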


The proof of \cref{thm:control} will be given later in this section. Here, we discuss its implications. \Cref{thm:control} shows that the solution mapping $H^{-1}$ exhibits exponential decay structure with respect to the time metric. In particular, the influence of the data located far from the time interval of interest $I_1$ decays exponentially with respect to the distance. This implies that one can approximate the solution over $I_1$ using only the data over a neighborhood of $I_1$ with an exponentially small error. Such exponential decay properties have been observed in various optimal control problems \cite{motee_optimal_2008,lin_perturbation-based_2021,shin_near-optimal_2023}, and \cref{thm:control} has recently gained interest in the control literature \cite{grune2020exponential,na2020exponential}. This property can be used to establish the convergence of overlapping decomposition algorithms \cite{na2020exponential} and to develop efficient non-uniform discretization methods \cite{shin_diffusing-horizon_2023,grune2020exponential}.

\WT{
  \cref{thm:control} is often interpreted as exponential decay of \textit{sensitivity}.
  This interpretation is relevant when one considers the perturbation analysis of \cref{eqn:ocp}.
  In particular, one can interpret $p(\cdot)$, $\overline{s}$, and $\overline{\lambda}$ as perturbations to the nominal data (which can be set to zero without loss of generality).
  Then, \cref{thm:control} shows how the perturbations in $p(\cdot)$, $\overline{s}$, and $\overline{\lambda}$ influence the outputs $z(\cdot)$.
  Specifically, the theorem states that if  $I_1, I_2 \subseteq [0,T]$ are well separated (so $d(I_1,I_2)$ is large), then perturbations in $p(\cdot)$ at $I_2$ have an exponentially decaying influence on the solutions $z(\cdot)$ at $I_1$.
  A similar relation holds for the exponentially decaying influence of the boundary terms $\overline{s}$ and $\overline{\lambda}$ on the solutions $z(\cdot)$ far from the boundary.
}

We now aim to show this by analyzing the discretization of the original problem. In particular, we apply a finite difference approximation scheme to \cref{eqn:pont}:
\begin{subequations}\label{eqn:ocp-disc}
  \begin{align}
    \frac{\lambda_k - \lambda_{k+1}}{h}- \Lambda^\top \lambda_{k+1} +C^\top C s_k
    &=  q_k ,\quad k\in\mathbb{Z}_{[0,N-1]}\\
    \lambda_N
    &= \overline{\lambda}\\
    u_k - B^\top \lambda_k,
    &= r_k,\quad k\in\mathbb{Z}_{[0,N-1]}  \\
    s_0
    &= \overline{s}\\
    \frac{s_{k} - s_{k-1}}{h} - \Lambda s_{k-1} - B u_{k-1}
    &= d_{k} ,\quad k\in\mathbb{Z}_{[1,N]}.
  \end{align}
\end{subequations}
where $h:=T/N$, $t_k:= hk$, $q_k:=q(t_k)$, $r_k:=r(t_k)$, and $d_k:=d(t_k)$. We rewrite \cref{eqn:ocp-disc} in a matrix form as follows:
\begin{align}\label{eqn:struct}
  \scriptsize
  \setlength\arraycolsep{1pt}
  \begin{bmatrix}
     C^\top C&&&&&&&&&\frac{1}{h}I&-\frac{1}{h}I-\Lambda^\top\\
       &C^\top C&&&&&&&&&\frac{1}{h}I&&\\
       &&\ddots&&&&&&&&&\ddots \\
       &&&C^\top C&&&&&&&&&\frac{1}{h}I&-\frac{1}{h}I-\Lambda^\top\\
       &&&&0&&&&&&&&&I\\
       &&&&&I&&&&&-B^\top\\
       &&&&&&\ddots&&&&&\ddots&\ddots\\
       &&&&&&&I&&&&&&-B^\top\\
    I&&&&\\
    -\frac{1}{h}I-\Lambda & \frac{1}{h}I&&&&-B \\
       &&\ddots&&&&\ddots\\
       &&&\frac{1}{h}I&&&\ddots\\
       &&&-\frac{1}{h}-\Lambda&\frac{1}{h}I&&&-B
  \end{bmatrix}
  \begin{bmatrix}
    s_0\\
    s_1\\
    \vdots\\
    s_{N-1}\\\
    s_{N}\\
    u_0\\
    \vdots\\
    u_{N-1}\\
    \lambda_0\\
    \lambda_1\\
    \vdots\\
    \lambda_{N-1}\\
    \lambda_N
  \end{bmatrix}
  =
  \begin{bmatrix}
    q_0\\
    q_1\\
    \vdots\\
    q_{N-1}\\
    \overline{\lambda}\\
    r_0\\
    \vdots\\
    r_{N-1}\\
    \overline{s}\\
    d_1\\
    \vdots\\
    d_{N-1}\\
    d_N
  \end{bmatrix}.
\end{align}
Here, one can observe that the matrix in \cref{eqn:struct} is $h$-banded, where the metric space is given by the discrete time horizon. 
In a block form, after the rescaling of the rows associated with $\overline{s}$ and $\overline{\lambda}$, we can rewrite \cref{eqn:struct} as:
\begin{align}\label{eqn:H-block}
  \underbrace{
  \begin{bmatrix}
    \bC_h^\top \bC_h&&\bLambda_h^\top\\
    &I & \bB_h^\top\\
    \bLambda_h & \bB_h\\
  \end{bmatrix}}_{\bH_h}
  \underbrace{
  \begin{bmatrix}
    \bs_h\\
    \bu_h\\
    \blambda_h\\
  \end{bmatrix}
  }_{\bz_h}
  =
  \underbrace{
  \begin{bmatrix}
    \bq_h\\
    \br_h\\
    \bd_h\\
  \end{bmatrix}
  }_{\bp_h},
\end{align}
where we let
\begin{align*}
  \bC_h
  &:=
    \begin{bmatrix}
      C\\
      &\ddots\\
      &&C&0
    \end{bmatrix};
  &&
     \bLambda_h:=
     \begin{bmatrix}
       \frac{1}{h}I\\
       -\frac{1}{h}I-\Lambda&\frac{1}{h}I\\
                            &\ddots&\ddots\\
                            &&-\frac{1}{h}I-\Lambda&\frac{1}{h}I\\
     \end{bmatrix};
  &&&
      \bB_h:=
      \begin{bmatrix}
        0\\
        B\\
        &\ddots\\
        &&B
      \end{bmatrix},
\end{align*}
$\bq_h:=(q_0, \cdots , q_{N-1}, \overline{\lambda}/h)$;  $\br_h:=(r_0, r_1, \cdots , r_{N-1})$;  and $\bd_h :=(\overline{s}/h, d_1, \cdots , d_N)$.

We will denote
\begin{align}\label{eqn:h-norm}
  \|\cdot\|_h:=h^{1/2}\|\cdot\|_2.
\end{align}
\WT{For $x(\cdot)\in C([0,T]; \mathbb{R}^{n_x})$ and $\bx_h:=[x(t_0);\cdots;x(t_N)]$, we have}
\begin{align*}
  \color{black}\|\bx_h\|_h = \left(\sum_{i=0}^N h\|x(t_i)\|^2\right)^{1/2}\rightarrow \left(\int_{0}^T \|x(t)\|^2dt\right)^{1/2}, 
\end{align*}
as $h\rightarrow 0$, since $\|x(\cdot)\|$ is continuous on $[0,T]$, and the Riemann sum of a continuous function converges to its integral.

In the following three lemmas, we will show that the discretized solution converges to the continuous solution, by showing consistency, numerical stability, and convergence. With this, later we will be able to apply the exponential decay result in \cref{cor:main} to the discretized solution to obtain the exponential decay result for the continuous solution.

\begin{lemma}[Consistency]\label{lem:cons}
  The following holds for all $p(\cdot)\in C([0,T];\mathbb{R}^{n_z})$ and $\overline{s},\overline{\lambda}\in\mathbb{R}^{n_s}$:
  \begin{align*}
    \lim_{h\rightarrow 0}\|\bH_{h}\bz^\star_h - \bp_h\|_h=0,
  \end{align*}
  where $\bz^\star_h:=(\bs^\star_h,\bu^\star_h,\blambda^\star_h)$, $\bs^\star_h:=(s^\star(t_0),\cdots,s^\star(t_N))$, $\bu^\star_h:=(u^\star(t_0),\cdots,u^\star(t_{N-1}))$, $\blambda^\star_h:=(\lambda^\star(t_0), \cdots,\lambda^\star(t_N))$, and $z^\star(\cdot):=(s^\star(\cdot), u^\star(\cdot), \lambda^\star(\cdot)) = H^{-1}(p(\cdot),\overline{s},\overline{\lambda})$.
\end{lemma}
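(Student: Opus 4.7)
My plan is to evaluate the residual $\bH_h \bz^\star_h - \bp_h$ block-row by block-row and show that each piece vanishes in the $\|\cdot\|_h$ norm as $h\to 0$.

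First, I would partition the rows of $\bH_h$ into three groups: (i) the two ``boundary'' rows (the $1/h$-rescaled identity rows enforcing $s_0=\overline{s}$ and $\lambda_N=\overline{\lambda}$); (ii) the $N$ algebraic stationarity rows $u_k - B^\top \lambda_k = r_k$; and (iii) the $N$ discrete costate rows together with the $N$ discrete state rows. Groups (i) and (ii) give \emph{exactly zero} residual when evaluated at the sampled continuous solution: $s^\star(0)=\overline{s}$ and $\lambda^\star(T)=\overline{\lambda}$ are built into the mild form \cref{eqn:mild}, and the stationarity relation $u^\star(t) = r(t) + B^\top\lambda^\star(t)$ holds pointwise by \cref{eqn:mild}.

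Second, for each surviving dynamic row I would write the residual in closed form by applying the propagator identity encoded in \cref{eqn:mild}. For the state equation, this yields
\begin{align*}
  R_k^{(s)}
  & := \frac{s^\star(t_k)-s^\star(t_{k-1})}{h} - \Lambda s^\star(t_{k-1}) - B u^\star(t_{k-1}) - d(t_k)\\
  & = \left(\frac{e^{\Lambda h}-I}{h}-\Lambda\right) s^\star(t_{k-1}) + \frac{1}{h}\int_{t_{k-1}}^{t_k}\Big[e^{\Lambda(t_k-\tau)}(B u^\star(\tau)+d(\tau)) - (B u^\star(t_{k-1})+d(t_k))\Big] d\tau,
\end{align*}
with an entirely analogous expression for $R_k^{(\lambda)}$ obtained from the mild form of $\lambda^\star$.

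Third, I would bound these residuals uniformly in $k$. The matrix coefficient $\frac{e^{\Lambda h}-I}{h}-\Lambda$ is $O(h)$ by the Taylor expansion of the exponential, and $s^\star(\cdot),u^\star(\cdot),\lambda^\star(\cdot)$ are uniformly bounded on $[0,T]$ (continuity on a compact interval follows by bootstrapping on \cref{eqn:mild} from $p(\cdot)\in C([0,T];\mathbb{R}^{n_z})$). The integrand in the second term splits as $(e^{\Lambda(t_k-\tau)}-I)(Bu^\star(\tau)+d(\tau))$, which is $O(h)$ uniformly on $[t_{k-1},t_k]$, plus $B(u^\star(\tau)-u^\star(t_{k-1})) + (d(\tau)-d(t_k))$, which is controlled by the moduli of continuity of $u^\star$ and $d$ on $[0,T]$. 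Setting $\omega(h) := \sup_k(\|R_k^{(s)}\|_2 + \|R_k^{(\lambda)}\|_2)$, this gives $\omega(h)\to 0$ as $h\to 0$. A Riemann-style summation then yields $\|\bH_h \bz^\star_h - \bp_h\|_h^2 = h\sum_k \|R_k\|_2^2 \leq T\,\omega(h)^2 \to 0$.

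The main obstacle I expect is the third step. Since $p(\cdot)$ is assumed only continuous (not Lipschitz or $C^1$), no Taylor expansion of $z^\star$ is available, and the uniform vanishing of $\omega(h)$ has to rest entirely on uniform continuity on the compact interval $[0,T]$. As a necessary preliminary I must first confirm that $s^\star,u^\star,\lambda^\star$ themselves are continuous (not merely $L^2$) under \Cref{ass:bdd-sol-exists} and $p\in C$; this is handled by bootstrapping \cref{eqn:mild} --- continuous $q$ and a continuous candidate $s^\star$ produce continuous $\lambda^\star$, hence continuous $u^\star$, hence continuous $s^\star$ --- together with the uniqueness granted by the bounded inverse $H^{-1}$. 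Once continuity of the sampled solution is in hand, the uniform estimates in Step three go through without modification.
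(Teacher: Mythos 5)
Your proof is correct, and it reaches the paper's conclusion by a slightly different mechanism. The skeleton is the same in both arguments: the boundary rows and the algebraic stationarity rows are satisfied exactly by the sampled mild solution, the dynamic rows contribute a residual that is $o(1)$ uniformly in $k$, and the $\|\cdot\|_h$ norm of the full residual then vanishes by a Riemann-sum count. Where you differ is in how the dynamic residual is controlled. The paper first upgrades $z^\star(\cdot)$ to $C^1$ (which, as you note, follows by bootstrapping regularity through \cref{eqn:mild}: $s^\star\in L^2$ makes the costate integral continuous, hence $u^\star$ continuous, hence $s^\star$ continuous and then $C^1$), applies the mean value theorem entrywise to replace each difference quotient by $\dot{s}^\star[i](t'_{k,i})$ at an intermediate point, and finishes with uniform continuity of $\dot{s}^\star$ and $\dot{\lambda}^\star$ on the compact interval. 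You instead stay at the level of the variation-of-constants identity $s^\star(t_k)=e^{\Lambda h}s^\star(t_{k-1})+\int_{t_{k-1}}^{t_k}e^{\Lambda(t_k-\tau)}(Bu^\star(\tau)+d(\tau))\,d\tau$ and bound the residual by $\bigl\|(e^{\Lambda h}-I)/h-\Lambda\bigr\|=O(h)$ together with the moduli of continuity of $u^\star$ and $d$. Both routes ultimately rest on the same regularity input (uniform continuity on $[0,T]$ of quantities derived from the mild solution); yours never invokes differentiability of $z^\star$, which makes it marginally more self-contained, while the paper's is shorter once $C^1$ regularity is granted. One cosmetic caution: \cref{eqn:mild} as printed carries $B\lambda^\star(t)$ rather than $B^\top\lambda^\star(t)$ in the stationarity relation while the discretization uses $B^\top$; your claim that the algebraic rows have exactly zero residual implicitly reads the mild form with $B^\top$, which is clearly the intended statement.
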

\begin{proof}
  From \cref{eqn:mild}, one can see that
\begin{subequations}\label{eqn:zero}
  \begin{align}
    u^\star(t_k) - B^\top \lambda^\star(t_k)- r(t_k) &= 0,\quad \forall k=0,\cdots, N-1\\
    \frac{1}{h} s^\star(t_0) - \frac{1}{h}\overline{s} &= 0\\
    \frac{1}{h} \lambda^\star(t_N)  - \frac{1}{h}\overline{\lambda}  &= 0.
  \end{align}
\end{subequations}

Furthermore, since $p(\cdot)$ is continuous, we can observe from \cref{eqn:mild} that $z^\star(\cdot)$ is also continuously differentiable. Accordingly, each entry of $z^\star(\cdot)$ is continuously differentiable. 
Since each entry of $z^\star(\cdot)$ is continuously differentiable on a compact set, one can choose $h=h(\epsilon)>0$ for a given $\epsilon>0$ such that the following holds for all $t,t'\in[0,T]$ such that $|t-t'|\leq h$:
\begin{align*}
  |\dot{s}^\star[i](t)-\dot{s}^\star[i](t')| \leq \epsilon,\quad |\dot{\lambda}^\star[i](t)-\dot{\lambda}^\star[i](t')| \leq \epsilon.
\end{align*}
Furthermore, by Taylor's theorem, each entry of $\dot{s}^\star[i](\cdot)$ and $\dot{\lambda}^\star[i](\cdot)$ for $i=1,\cdots,n_s$ satisfies the following:
\begin{subequations}\label{eqn:taylor}
  \begin{align}
    \exists t'_{k,i}\in [t_{k},t_{k+1}]:& \quad \frac{s^\star[i](t_{k+1}) - s^\star[i](t_k)}{h}- \dot{s}^\star(t'_{k,i}) = 0 \\
    \exists t''_{k,i}\in [t_{k},t_{k+1}]:& \quad \frac{\lambda^\star[i](t_{k+1}) - \lambda^\star[i](t_k)}{h} -\dot{\lambda}^\star(t''_{k,i}) = 0
  \end{align}
\end{subequations}
Thus, we have the following for $k=0,\cdots,N-1$.
\begin{align}\label{eqn:res}
  \begin{aligned}[t]
    &\left\|\frac{s^\star(t_{k+1}) - s^\star(t_k)}{h} - \Lambda s^\star(t_k)  - Bu^\star(t_k) - d(t_k)\right\|_2\\
    &{=} \left(\sum_{i=1}^{n_s}\left|\dot{s}^\star[i](t'_{k})- \dot{s}^\star[i](t_{k})\right|^2\right)^{1/2}\\
    &\stackrel{\mathclap{\cref{eqn:pont},\cref{eqn:taylor}}}{\leq} \sqrt{n_s}\epsilon
      , \\
    &\left\|-\frac{\lambda^\star(t_{k+1}) - \lambda^\star(t_k)}{h} - \Lambda^\top \lambda^\star(t_{k+1})  + C^\top Cs^\star(t_k) - q(t_k)\right\|_2\\
    &{=} \left(\sum_{i=1}^{n_s}\left|\dot{\lambda}^\star[i](t''_{k})- \dot{\lambda}^\star[i](t_{k})\right|^2\right)^{1/2}\\
    &\stackrel{\mathclap{\cref{eqn:pont},\cref{eqn:taylor}}}{\leq}\sqrt{n_s} \epsilon        .
  \end{aligned}
\end{align}
Here, note that \cref{eqn:pont} holds due to the continuity of $p(\cdot)$.
By \cref{eqn:zero} and \cref{eqn:res}, 
\begin{align*}
  \|\bH_h \bz^\star_h - \bp_h\|_h
  &\leq\left[h\sum_{k=0}^{N-1}\left(\sqrt{n_s}\epsilon\right)^2+\left(\sqrt{n_s}\epsilon\right)^2\right]^{1/2}\\
  &\leq \sqrt{n_s}T\epsilon.
\end{align*}
Thus, we have that  $\|\bH_h\bz^\star_h - \bp_h\|_h\rightarrow 0$ as $h\rightarrow 0$.
\end{proof}


\begin{lemma}[Numerical Stability]\label{lem:stab}
  Under \Cref{ass:main}, the following holds:
  \begin{align*}
    \lim_{h\rightarrow 0} \|\bH_h^{-1}\|_2\leq D,
  \end{align*}
  where $D$ is defined in \cref{eqn:const}.
\end{lemma}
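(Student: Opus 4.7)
The plan is to cast $\bH_h$ from \cref{eqn:H-block} as a saddle-point system matching the form of \Cref{thm:uniform}, with $G := \diag(\bC_h^\top \bC_h, I)$ and $F := [\bLambda_h, \bB_h]$, and then derive asymptotic (as $h\to 0$) bounds on the three quantities $\theta_1, \theta_2, \theta_4$ appearing there. The upper bound $\theta_1 = L^2$ on $\|G\|_2$ is immediate from $\|C\| \leq L$ and $L \geq 1$, so the real work is in lower-bounding $\sigma_{\min}(F)^2$ (which plays the role of $\theta_2$) and $\sigma_{\min}(G + F^\top F)$ (which plays the role of $\theta_4$), both stably as $h \to 0$.

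For $\theta_2$, I would use stabilizability of $(\Lambda, B)$ to pick a feedback $K_c$ with $\|K_c\| \leq L$ such that $\tilde{\Lambda} := \Lambda - B K_c$ is $(L, \alpha)$-stable. Given any right-hand side $w$, I would construct an explicit preimage under $F$ by choosing $y_{k-1} := K_c x_{k-1}$ and iterating the closed-loop forward Euler recursion $x_k = \Phi_h x_{k-1} + h w_k$ with $x_0 = h w_0$, where $\Phi_h := I + h\tilde{\Lambda}$. Applying discrete Young's inequality to the variation-of-parameters formula $x_k = \Phi_h^k x_0 + h\sum_j \Phi_h^{k-j} w_j$, together with an asymptotic bound $\|\Phi_h^k\|_2 \leq L e^{-\alpha h k}(1 + o(1))$ lifted from $\|e^{\tilde{\Lambda} t}\|_2 \leq L e^{-\alpha t}$, gives $\|x\|_2 \leq (L/\alpha)\|w\|_2 + o(1)$. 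Combining with $\|y\|_2 \leq L \|x\|_2$ yields $\|(x, y)\|_2 \leq (1+L)(L/\alpha)\|w\|_2 + o(1)$, and hence $\liminf_{h\to 0} \theta_2 \geq \alpha^2/(L^2(1+L)^2)$.

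For $\theta_4$, I would dually use detectability of $(\Lambda, C)$ to pick $K_o$ with $\|K_o\| \leq L$ such that $\Psi := \Lambda - K_o C$ is $(L, \alpha)$-stable. For any test pair $(x, y)$, set $e := \bLambda_h x + \bB_h y$ (the dynamics residual), and introduce a discrete Luenberger observer $\hat{x}_k$ initialized at $\hat{x}_0 = 0$ and driven by $B y_{k-1}$ together with $K_o$ times the output $C x_{k-1}$. The estimation error $\tilde{x} := x - \hat{x}$ then obeys the stable closed-loop recursion $\tilde{x}_k = (I + h\Psi)\tilde{x}_{k-1} + h e_k$, which the same Euler-vs-exponential argument controls by $\|e\|_2$; the observer $\hat{x}$ itself is controlled by $\|\bC_h x\|_2$ and $\|y\|_2$. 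Splitting $x = \tilde{x} + \hat{x}$ and carefully tracking constants should yield an inequality of the form $\|x\|_2^2 + \|y\|_2^2 \leq (L^2(1+2L)^2/\alpha^2)\bigl(\|\bC_h x\|_2^2 + \|e\|_2^2 + \|y\|_2^2\bigr) + o(1)$, so that $\liminf_{h\to 0} \theta_4 \geq \alpha^2/(L^2(1+2L)^2)$.

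Substituting $\theta_1 = L^2$, $\theta_2 = \alpha^2/(L^2(1+L)^2)$, and $\theta_4 = \alpha^2/(L^2(1+2L)^2)$ into the singular-value bound of \Cref{thm:uniform} and expanding algebraically recovers precisely the formula for $\widetilde{d}$ in \cref{eqn:const}, completing the argument. The hard part will be the discrete-to-continuous matrix-exponential comparison: the forward Euler map $I + h\tilde{\Lambda}$ is not a semigroup, so the continuous-time decay $\|e^{\tilde{\Lambda} t}\|_2 \leq L e^{-\alpha t}$ does not transfer automatically. A Taylor expansion $\log(I + h\tilde{\Lambda}) = h\tilde{\Lambda} + O(h^2 \|\tilde{\Lambda}\|^2)$ combined with $k h \leq T$ is needed to upgrade the continuous bound to the uniform discrete one $\|(I + h\tilde{\Lambda})^k\|_2 \leq L e^{-\alpha h k}(1 + o(1))$, and the same estimate symmetrically drives the observer analysis. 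This technical bridge is the crux of the whole argument; once it is in hand, the rest reduces to careful bookkeeping of constants.
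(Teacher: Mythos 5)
Your proposal is correct and follows essentially the same route as the paper: both reduce to \Cref{thm:uniform} with $\theta_1=L^2$, $\theta_2=\alpha^2/(L^2(1+L)^2)$, $\theta_4=\alpha^2/(L^2(1+2L)^2)$, obtain the latter two via the stabilizing feedback and the detectability gain, and hinge on the Taylor-type comparison $\|(I+h\Phi)^k\|_2\leq Le^{-\alpha hk}+o(1)$ that you correctly identify as the crux. The only cosmetic difference is that the paper encodes your Luenberger-observer recursion for the $\theta_4$ bound as an explicit block factorization of $[\bC_h^\top,\,0,\,\bLambda_h^\top;\,0,\,I,\,\bB_h^\top]$ (whose triangular factor has norm $\leq 1+2L$), but the computation and the constants are identical.
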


\begin{proof}
    We let $\bG_h:=
  \begin{bmatrix}
    \bC_h^\top \bC_h\\
    &I
  \end{bmatrix}$ and $\bF_h:=
  \begin{bmatrix}
    \bLambda_h& \bB_h
  \end{bmatrix}$. We now aim to show in order to apply \cref{thm:uniform} later:
  \begin{subequations}\label{eqn:uniform}
    \begin{align}
      \label{eqn:uniform-1}\bG_h& \preceq L^2 {I},\\
      \label{eqn:uniform-2}\lim_{h\rightarrow 0}\bF_h\bF^\top_h &\succeq \frac{\alpha^2}{L^2 (1+L)^2} {I}\\
      \label{eqn:uniform-3}\lim_{h\rightarrow 0}\bG + \bF_h^\top \bF_h &\succeq \frac{\alpha^2}{L^2 (1+2L)^2} {I}.
    \end{align}
  \end{subequations}
  We first show \cref{eqn:uniform-1}. The upper bound directly follows from the block-diagonal structure of $\bG_h$ and that $\|C\|$ is bounded above by $L\geq 1$ due to \cref{ass:main}.

  We now aim to show \cref{eqn:uniform-2}. Observe that:
  \begin{align*}
    \bF_h =
    \begin{bmatrix}
      \bLambda_h - \bB_h\bK_h& \bB_h\\
    \end{bmatrix}
    \begin{bmatrix}
      {I}\\
      \bK_{h}&{I}
    \end{bmatrix}, \text{ where }\bK_h:=
  \begin{bmatrix}
    K\\
    &\ddots\\
    &&K&0\\
  \end{bmatrix}.
  \end{align*}
  Here, $K$ derives from $(L,\alpha)$-stabilizability of $(\Lambda,B)$.
  Now one can see that:
  \begin{align}\label{eqn:control-1}
    \bF_h\bF_h^\top \succeq \left\|\left(\bLambda_h - \bB_h\bK_h\right)^{-1}\right\|_2^{-2}\left\|\begin{bmatrix}{I}&-\bK_h\\&{I}\end{bmatrix}\right\|_2^{-2} {I}.
  \end{align}
  From the $(L,\alpha)$-stabilizability of $(\Lambda,B)$,
  \begin{align}\label{eqn:control-2}
    \left\|\begin{bmatrix}{I}&-\bK_h\\&{I}\end{bmatrix}\right\|_2 \leq 1 + L.
  \end{align}
  Further, one can observe that
  \begin{align*}
    \bLambda_h-\bB_h\bK_h=
    \frac{1}{h}
    \begin{bmatrix}
      {I}\\
      -{I}-h\Phi&{I}\\
      &&\ddots\\
      &&-{I}-h\Phi&{I}\\
    \end{bmatrix},
  \end{align*}
  where $\Phi:=\Lambda-BK$, and
  \begin{align*}
    (\bLambda_h-\bB_h\bK_h)^{-1} = h
    \begin{bmatrix}
      {I}\\
      {I}+h\Phi&{I}\\
      ({I}+h\Phi)^2&{I}+h\Phi&{I}\\
      \vdots&\vdots&\ddots&\ddots\\
      ({I}+h\Phi)^{N-1}&({I}+h\Phi)^{N-2}&\cdots&{I}+h\Phi&{I}\\
    \end{bmatrix}.
  \end{align*}
  Thus,
  \begin{align}\label{eqn:oppsum}
    \begin{aligned}[t]
      &\left\|(\bLambda_h-\bB_h\bK_h)^{-1}\right\|_2\\
      &\leq h\left\{
        \left\| I \right\|_2
        +
        \left\|
        \begin{bmatrix}
          0\\
          {I}+h\Phi&0\\
                   &{I}+h\Phi&0\\
                   &&\ddots&\ddots\\
                   &&&{I}+h\Phi&0\\
        \end{bmatrix}\right\|_2
        +
        \cdots
        +
        \left\|
        \begin{bmatrix}
          \\
          \\
          \\
          \\
          ({I}+h\Phi)^{N-1}&&&&&\\
        \end{bmatrix}\right\|_2
        \right\}\\
      &\leq h\sum_{k=0}^{N-1}\left\| (I+h\Phi)^k\right\|_2.
    \end{aligned}
  \end{align}
  We observe that $e^{\Phi t}$ is twice continuously differentiable and has uniformly bounded second-order derivatives on $[0,T]$. Thus, by Taylor's theorem, we have that there exists $M>0$ such that
  \begin{align}\label{eqn:c2small}
    \|e^{\Phi h} - I - h\Phi\|_2 \leq Mh^2.
  \end{align}
  We observe for sufficiently small $h$,
  \begin{align}\label{eqn:oppsum-2}
    \begin{aligned}[t]
      \left\| (I+h\Phi)^k\right\|_2
      &\;\;\leq\;\; \left\| e^{\Phi hk}\right\|_2 + \left\| e^{\Phi hk} - (I+h\Phi)^k\right\|_2\\
      &\stackrel{\mathclap{\text{\cref{ass:main}}}}{\;\;\leq\;\;}  L e^{-\alpha hk} +\left\| (I + h\Phi + (e^{\Phi h} - I - h\Phi))^{k} - (I + h\Phi)^k\right\|_2\\
      &\;\;\leq\;\;  L e^{-\alpha hk} + \sum_{i=1}^k {k \choose i}  \|I + h\Phi\|_2^{k-i} (Mh^2)^i\\
      &\;\;\leq\;\;  L e^{-\alpha hk} + \sum_{i=1}^k (1+\|\Phi\|_2h)^N (kMh^2)^i\\
      &\;\;\leq\;\;  L e^{-\alpha hk} + 2{(1+\|\Phi\|_2h)^N (kMh^2)}.
    \end{aligned}
  \end{align}
  Here,
  the third inequality follows from the binomial theorem and \cref{eqn:c2small};
  the fourth inequality follows from  $k-i\leq N$, and ${k \choose i}\leq k^i$;
  and the last inequality follows from $kMh^2<1/2$ for sufficiently small $h$. 
  Therefore, by combining \cref{eqn:oppsum} and \cref{eqn:oppsum-2}, we obtain
  \begin{align}\label{eqn:control-3}
    \left\|(\bLambda_h-\bB_h\bK_h)^{-1}\right\|_2
    &\leq h\sum_{k=0}^{N-1}\left(L e^{-\alpha hk} + 2{(1+\|\Phi\|_2h)^N (kMh^2)}\right)\\
    &\leq \frac{Lh}{1-e^{-\alpha h}} +M(1+\|\Phi\|_2h)^N(N^2-N)h^3   \rightarrow \frac{L}{\alpha} + M \cdot e^{\|\Phi\|_2 T} \cdot 0 
  \end{align}
  as $h\rightarrow 0$. Therefore, by \cref{eqn:control-1}, \cref{eqn:control-2}, and \cref{eqn:control-3}, we obtain \cref{eqn:uniform-2}.

  Now we aim to show  \cref{eqn:uniform-3}. We observe that:
  \begin{align*}
    \bG_h + \bF_h^\top\bF_h
    &=\begin{bmatrix}
      \bC_h^\top & &\bLambda^\top_h\\
      &I & \bB^\top_h
    \end{bmatrix}
    \begin{bmatrix}
      \bC^\top_h& &\bLambda^\top_h\\
      &I & \bB^\top_h
    \end{bmatrix}^\top,
  \end{align*}
  and
  \begin{align}\label{eqn:obs-1}
    \begin{bmatrix}
      \bC^\top_h& &\bLambda^\top_h\\
      &I & \bB^\top_h
    \end{bmatrix}
    =
    \begin{bmatrix}
      \bC^\top_h & &\bLambda^\top_h - \bC^\top_h\bK^\top_h\\
      &I &
    \end{bmatrix}
    \begin{bmatrix}
      {I}&&\bK_{h}^\top\\
      &{I}&\bB^\top_h\\
         &&{I}
    \end{bmatrix}, \text{ where }\bK_h:=
    \begin{bmatrix}
      0\\
      K\\
      &\ddots\\
      &&K\\
    \end{bmatrix},
  \end{align}
  where $K$ is derived from $(L,\alpha)$-detectability of $(\Lambda,C)$.
  We can derive the following:
  \begin{align*}
    &\bG_h+\bF_h^\top\bF_h \succeq \min\left(\left\|\left(\bLambda_h - \bK_h\bC_h\right)^{-1}\right\|_2^{-2},1\right)\left\|
      \begin{bmatrix}{I}\\&{I}\\-\bK_h&-\bB_h&{I}\end{bmatrix}
    \right\|_2^{-2} {I}.
  \end{align*}
  From the $(L,\alpha)$-detectability of $(\Lambda,C)$, one can easily see that
  \begin{align}\label{eqn:obs-2}
    \left\|
      \begin{bmatrix}{I}\\&{I}\\-\bK_h&-\bB_h&{I}\end{bmatrix}
    \right\|_2\leq  1+ 2L.
  \end{align}
  Similarly, to $\bLambda_h-\bB_h\bK_h$ case, one can show that
  \begin{align}\label{eqn:obs-3}
    \lim_{h\rightarrow 0}\left\|(\bLambda_h-\bK_h\bC_h)^{-1}\right\|_2 \leq \frac{L}{\alpha}.
  \end{align}
  Therefore, by \cref{eqn:obs-1}, \cref{eqn:obs-2}, and \cref{eqn:obs-3}, we can obtain \cref{eqn:uniform-3}.
  Finally, due to \cref{thm:uniform} and \cref{eqn:uniform}, we have that
  \begin{align*}
    \|\bH^{-1}_h\|_2
    &\leq \left(\frac{1+L^2/(\frac{\alpha^2}{L^2 (1+L)^2})}{\frac{\alpha^2}{L^2 (1+2L)^2}}\right)^{1/2}  + \max\left(\frac{1}{\frac{\alpha^2}{L^2 (1+2L)^2}},\frac{L^2}{\frac{\alpha^2}{L^2 (1+L)^2}}\right)  \\
    &\leq \frac{L(1+2L)\left(\alpha^{2}+L^4(1+L)^2\right)^{1/2} }{\alpha^2 }  + \frac{L^2{(1+2L)^{2}}}{\alpha^2 }\max\left(1,\frac{L^{2} (1+L)^{2}}{(1+2L)^{2}}\right).
  \end{align*}
\end{proof}


\begin{lemma}[Convergence]\label{lem:conv}
  Under \Cref{ass:main} and $p(\cdot)\in C([0,T];\mathbb{R}^{n_z})$, the following holds:
  \begin{align*}
    \lim_{h\rightarrow 0}\|\bz^\star_h - \bH^{-1}_h \bp_h\|_h = 0.
  \end{align*}
\end{lemma}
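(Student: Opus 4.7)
The plan is a direct application of the classical Lax-equivalence paradigm: consistency plus numerical stability implies convergence. Since the previous two lemmas supply exactly those two ingredients, the proof reduces to a one-line algebraic identity followed by a bound using submultiplicativity.

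Concretely, the first step is to write the error as the image under $\bH_h^{-1}$ of the residual of the sampled continuous solution,
\begin{align*}
\bz^\star_h - \bH_h^{-1}\bp_h \;=\; \bH_h^{-1}\bigl(\bH_h\bz^\star_h - \bp_h\bigr),
\end{align*}
which is valid for every $h>0$ small enough for $\bH_h$ to be invertible (finiteness of $\lim_{h\to 0}\|\bH_h^{-1}\|_2$ in \Cref{lem:stab} guarantees invertibility for all sufficiently small $h$). Next, because $\|\cdot\|_h = h^{1/2}\|\cdot\|_2$, the scalar factor $h^{1/2}$ cancels between the two sides, yielding
\begin{align*}
\|\bz^\star_h - \bH_h^{-1}\bp_h\|_h
\;\leq\; \|\bH_h^{-1}\|_2\,\|\bH_h\bz^\star_h - \bp_h\|_h.
\end{align*}

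Finally, taking $h\to 0$ and invoking the two preceding lemmas yields the result: \Cref{lem:stab} implies that $\|\bH_h^{-1}\|_2$ stays bounded (by a constant arbitrarily close to $\widetilde{d}$) for all sufficiently small $h$, while \Cref{lem:cons}, whose hypothesis $p(\cdot)\in C([0,T];\mathbb{R}^{n_z})$ is precisely the one assumed in \Cref{lem:conv}, gives $\|\bH_h\bz^\star_h - \bp_h\|_h \to 0$. Multiplying a bounded factor by a vanishing one produces a vanishing product, so the right-hand side tends to $0$ and the claim follows.

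Since all the analytical heavy lifting---constructing the sampled continuous solution and controlling its residual (consistency), and extracting uniform invertibility of $\bH_h$ from the stabilizability/detectability conditions via \Cref{thm:uniform} (stability)---has already been done in \Cref{lem:cons,lem:stab}, there is no genuine obstacle left; the present lemma is essentially an accounting statement tying those two results together via the identity above.
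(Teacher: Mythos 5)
Your proposal is correct and follows the same route as the paper: write the error as $\bH_h^{-1}(\bH_h\bz^\star_h - \bp_h)$, bound it by $\|\bH_h^{-1}\|_2\,\|\bH_h\bz^\star_h - \bp_h\|_h$ (using that the $h^{1/2}$ scaling cancels so the induced $\|\cdot\|_{h\to h}$ norm equals $\|\cdot\|_2$), and conclude via \Cref{lem:stab} and \Cref{lem:cons}. No gaps.
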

\begin{proof}
    Observe:
  \begin{align*}
    \|\bz^\star_h - \bH^{-1}_h \bp_h\| \leq \|\bH^{-1}_h\|_{h\rightarrow h}\|\bH_{h}\bz^\star_h - \bp_h\|_h.
  \end{align*}
  We observe that as an operator norm, $\|\cdot\|_{h\rightarrow h} = \|\cdot\|_2$. By taking $h\rightarrow 0$, and from the consistency (\cref{lem:cons}) and stability (\cref{lem:stab}), we obtain the desired result.
\end{proof}

From \cref{lem:stab} and the structure of $\bH_h$  in \cref{eqn:H-block}, one can verify that the exponential decay bound in \cref{cor:main} for the discretized problem in \cref{eqn:ocp-disc} is uniformly bounded.

Now, we are ready to prove \cref{thm:control}. We apply the exponential decay result in \cref{cor:main} to the discretized solution $\bz^\star$ to show the exponential decay in the discrete regime, and we use the asymptotic convergence result in \cref{lem:conv} to map this result to continuous time domain. We generalize the result for $p(\cdot)\in C([0,T];\mathbb{R}^{n_z})$ into $p(\cdot)\in L^2([0,T];\mathbb{R}^{n_z})$ by using the boundedness of $H^{-1}$ \WT{(recall \Cref{thm:bdd-sol-exists})} and the density of $C([0,T];\mathbb{Z}^{n_z})$ within the Hilbert space $L^2([0,T];\mathbb{R}^{n_z})$.
Here, we will see that our tight bound in \cref{thm:main} is essential to prove \cref{thm:control}; if the loose bounds in \cref{prop:demko,prop:shin} are used, the bound diverges upon mesh refinement and one cannot establish the asymptotic bound.

\begin{proof}[Proof of \cref{thm:control}]
  First, we consider a special case, where $p(\cdot)\in C([0,T];\mathbb{R}^{n_z})$. From the triangle inequality and the submultiplicativity of matrix norms, we have the following:
  \begin{align}\label{eqn:breakdown}
    \begin{aligned}[t] 
      &\left[\int_{I_1} \|z^\star(t) \|^2 dt\right]^{1/2}
        \leq
        \underbrace{\left(\left[\int_{I_1} \|z^\star(t) \|^2 dt\right]^{1/2} - \|  (\bz^\star_h)_{I_1}\|_h \right)}_{\text{term 1}}\\
      & +
        \underbrace{\| \bz^\star_h - \bH^{-1}_h \bp_h\|_h}_{\text{term 2}}
        +\underbrace{\|(\bH^{-1}_h)_{I_1,I_2\setminus I_{\bar s}\setminus I_{\bar\lambda}}\|_{h\leftarrow h }\| (\bp_h)_{I_2 \setminus I_{\bar s}\setminus I_{\bar\lambda}}\|_h}_{\text{term 3}}
      \\
      &
        +\underbrace{\|(\bH^{-1}_h)_{I_1,[0,T]\setminus I_2\setminus I_{\bar s}\setminus I_{\bar\lambda}}\|_{h\leftarrow h }\| (\bp_h)_{[0,T]\setminus I_2 \setminus I_{\bar s}\setminus I_{\bar\lambda}}\|_h}_{\text{term 4}}
      \\
      &
        + \underbrace{\|(\bH^{-1}_h)_{I_1,I_{\bar s}}\|_{h\leftarrow h }\| (\bp_h)_{I_{\bar s}}\|_h}_{\text{term 5}}
        +\underbrace{ \|(\bH^{-1}_h)_{I_1,I_{\bar\lambda}}\|_{h\leftarrow h }\| (\bp_h)_{I_{\bar\lambda}}\|_h.}_{\text{term 6}}
    \end{aligned}
  \end{align}
  where $(\cdot)_{I}$ denotes the restriction of a vector $(\cdot)$ to the space whose time index is within $I\subseteq [0,T]$, and $(\cdot)_{I,J}$ denotes the restriction of a matrix $(\cdot)$  to the row and column spaces whose time indexes are within $I\subseteq[0,T]$ and $J\subseteq [0,T]$, respectively. We denote by $(\cdot)_{I_{\overline{s}}}$ and $(\cdot)_{I_{\overline{\lambda}}}$ the restriction of the argument to the indexes associated with $\overline{s}$ and $\overline{\lambda}$, respectively. Furthermore, we let $(\cdot)_{I\setminus I_{\overline{s}}\setminus I_{\overline{\lambda}}}$ denote the vector whose time index is within $I\subseteq [0,T]$, but excluding the indexes associated with $\overline{s},\overline\lambda$. A similar convention is used for the for matrix arguments involving $I_{\overline{s}}$, $I_{\overline{\lambda}}$, and $I\setminus I_{\overline{s}}\setminus I_{\overline\lambda}$.

  Since $\bz^\star(\cdot)$ is continuous, the Riemann sum of $\|z^\star(\cdot)\|$ over $I_1$ converges to its integral, and thus, we have that term 1 $\rightarrow 0$ as $h\rightarrow 0$. Further, by \cref{lem:conv}, we have that term 2 $\rightarrow 0$ as $h\rightarrow 0$.

  Now we aim to analyze terms 3--6.  First, observe that
  \begin{align*}
    \| (\bp_h)_{I_{\bar s}}\|_h
    &=\|\overline{s}\|\\
    \| (\bp_h)_{I_{\bar \lambda}}\|_h
    &=\|\overline{\lambda}\|\\
    \lim_{h\rightarrow 0}\| (\bp_h)_{I_2 \setminus I_{\bar s}\setminus I_{\bar\lambda}}\|_h
    &=\left[\int_{I_2} \|p(t)\|^2 dt\right]^{1/2}\\
    \lim_{h\rightarrow 0}\| (\bp_h)_{[0,T]\setminus I_2 \setminus I_{\bar s}\setminus I_{\bar\lambda}}\|_h
    &=\left[\int_{[0,T]\setminus I_2} \|p(t)\|^2 dt\right]^{1/2}.
  \end{align*}
  Here, the last two equalities follow from the convergence or Riemann sum to its integral for continuous functions.
  Now, due to \cref{lem:stab}, for sufficiently small $h$, we have that $\|\bH^{-1}_h\| \leq 2D$. Further, by inspecting the structure of $\bH_h$ in \cref{eqn:H-block}, one can see that
  \begin{align*}
    \|\bH_h\|
    &\leq
      \left\|
      \begin{bmatrix}
        \bC_h^\top \bC_h&&\bLambda_h^\top\\
                        &I & \bB_h^\top\\
        \bLambda_h & \bB_h\\
      \end{bmatrix}
      \right\|_2\\
    &\leq
      \left\|
      \begin{bmatrix}
        \bC_h^\top \bC_h&&\\
                        & & \bB_h^\top\\
         & \bB_h\\
      \end{bmatrix}
      \right\|_2
      +
      \left\|
      \begin{bmatrix}
        &&\bLambda_h^\top\\
                        &I & \\
        \bLambda_h &\\
      \end{bmatrix}
      \right\|_2\\
    &\leq
      \max(L^2,L)
      +
      \max(L+\frac{2}{h},1)
    \\
    &\leq
      L(L+1)+2/h
  \end{align*}
  Therefore, we have that $\sigma_{>0}(\bH_h)\in [a,b]$, where $a=1/2D$ and $b=L(L+1)+2/h$. Now we aim to analyze
  \begin{align*}
  f_2(d/h; a, b)
  &\leq \dfrac{4 (b+a)^{3/2}}{ab(b-a)^{1/2}}\left(\dfrac{b/a-1}{b/a+1}\right)^{ \frac{d/h -1}{2}}.
\end{align*}
We observe that 
\begin{align}\label{eqn:this-is-where-it-breaks}
  \lim_{h\rightarrow 0}\frac{4(b+a)^{3/2}}{ab(b-a)^{1/2}}=\WT{8D} 
\end{align}
and
\begin{align*}
  \lim_{h\rightarrow 0}\left(\dfrac{b/a-1}{b/a+1}\right)^{ \frac{d/h -1}{2}}
  &=\lim_{h\rightarrow 0}\left(\dfrac{2D L(L+1) + 4D/h - 1}{2D L(L+1) + 4D/h + 1}\right)^{ \frac{d/h -1}{2}}\\
  &=\lim_{h\rightarrow 0}\left(\dfrac{1+(L(L+1)/2-1/4D)h}{1+(L(L+1)/2+1/4D)h}\right)^{ \frac{d/h -1}{2}}\\
  &= \left(\frac{e^{L(L+1)/2-1/4D}}{e^{L(L+1)/2+1/4D}}\right)^{d/2}\\
  &= e^{-d/4D}.
\end{align*}
Therefore, for sufficiently small $h>0$,
\begin{align}\label{eqn:f2}
  f_2(d/h;a, b) \leq 8D e^{-d/4D}.
\end{align}
  We observe that by \cref{eqn:f2}, \cref{cor:main}, and the fact that $\bH_h$ is $h$-banded (can be confirmed from the structure of $\bH_h$ in \cref{eqn:struct}),
\begin{align*}
  \lim_{h\rightarrow 0}\|(\bH^{-1}_h)_{I_1,I_{\bar s}}\| &\leq \WT{8D} e^{-d(I_1,0)/4D}\\
  \lim_{h\rightarrow 0}\|(\bH^{-1}_h)_{I_1,I_{\bar \lambda}}\|&\leq  \WT{8D} e^{-d(I_1,T)/4D}\\
  \lim_{h\rightarrow 0}\|(\bH^{-1}_h)_{I_1,I_2\setminus I_{\bar s}\setminus I_{\bar \lambda}}\|  &\leq \WT{8D}  e^{-d(I_1,I_2)/4D}\\
\lim_{h\rightarrow 0}\|(\bH^{-1}_h)_{I_1,[0,T]\setminus I_2\setminus I_{\bar s}\setminus I_{\bar \lambda}}\|  &\leq \WT{8D} e^{-d(I_1,[0,T]\setminus I_2)/4D}.\end{align*}
Thus, by taking a limit $h\rightarrow 0$ in \cref{eqn:breakdown}, we obtain the desired result for $p(\cdot)\in C([0,T];\mathbb{R}^{n_z})$ case.

Now, we consider $p(\cdot)\in L^2([0,T];\mathbb{R}^{n_z})$ case. We observe that there exists a sequence of continuous functions $\{p_k(\cdot)\}_{k=0}^\infty$ such that $p_k(\cdot)\rightarrow p(\cdot)$ in $L^2([0,T];\mathbb{R}^{n_z})$ since $C([0,T];\mathbb{R}^{n})$ is dense in $L^2([0,T];\mathbb{R}^{n_z})$ due to \cite[lemma A.5.19]{curtain2012introduction}. Since $H^{-1}$ is bounded by \Cref{thm:bdd-sol-exists}, and bounded linear operators are continuous, we have that
\begin{align*}
  z_k(\cdot):=H^{-1} (p_k(\cdot),\overline{s},\overline{\lambda})\rightarrow z^\star(\cdot) = H^{-1} (p(\cdot),\overline{s},\overline{\lambda}),\quad \text{ as } k\rightarrow \infty.
\end{align*}
Since we have \cref{eqn:eds} for each $\{p_k(\cdot)\}_{k=0}^\infty$, we can obtain the desired result by taking $k\rightarrow \infty$.
\end{proof}

Lastly, we reiterate the importance of the tight bounds in \cref{thm:main,thm:uniform} in proving \cref{thm:control}. 
Note that if the bounds in \cref{prop:demko,prop:shin} are used, the limit in \cref{eqn:this-is-where-it-breaks} becomes unbounded. Therefore, with the existing bounds, one cannot obtain the asymptotic bound in \cref{thm:control}. We also see that the bounds in \cref{thm:uniform} play a crucial role in that the lower bound of the singular values $a$ is independent of $h$. This allows the decay bounds not to diverge as $h\rightarrow 0$.

\WT{
\section{Numerical Results}\label{sec:num}
We now aim to numerically validate \cref{thm:control}.}
Consider the problem in \cref{eqn:ocp} with $n_s=1$, $n_u=1$, $T=10$, and $\Lambda=1.1$. We analyze two cases: (i) regular case with $C=1$ and $B=1$, and (ii) near-singular case with $C=10^{-3}$ and $B=10^{-6}$. We also consider two cases of perturbations: (i) boundary perturbation with $\overline{s}=1$, $\overline{\lambda}=1$, and $q(t) = r(t) = d(t) \equiv 0$, and (ii) middle perturbation with $q(t) = r(t) = d(t) = \boldsymbol{1}_{[4,6]}$ and $\overline{s}=\overline{\lambda}=0$. We solved the discretized problems \cref{eqn:ocp-disc} with different mesh sizes and visualized the solutions $s(\cdot)$ and $s(\cdot)$ over $[0,1]$ in \cref{fig:results}. In the regular cases, perturbation effects decay exponentially, but this decay is not observed in the near-singular cases. The decay rate converges to a common rate as the mesh size is refined, validating our theoretical development. The source code to reproduce these results is available online \cite{noauthor_sshin23banded-approx-code_nodate}.

\begin{figure}[h]
  \centering
  \begin{tikzpicture}[scale=.8]
    \node[scale=.45] at (0,0) {\includegraphics{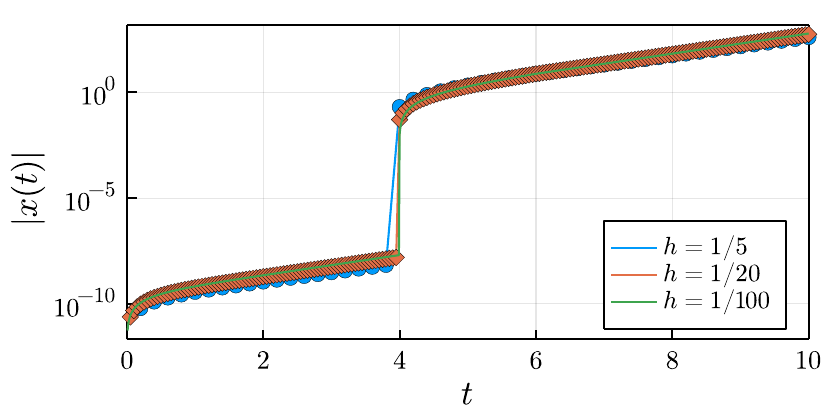}};
    \node[scale=.45] at (8,0) {\includegraphics{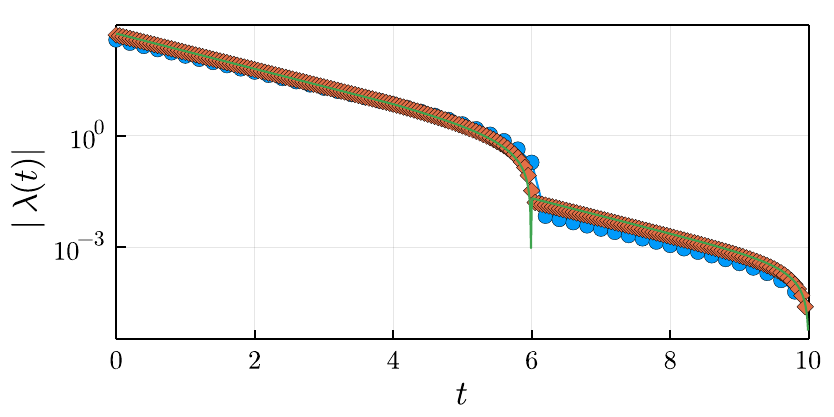}};
    \node[scale=.45] at (0,4) {\includegraphics{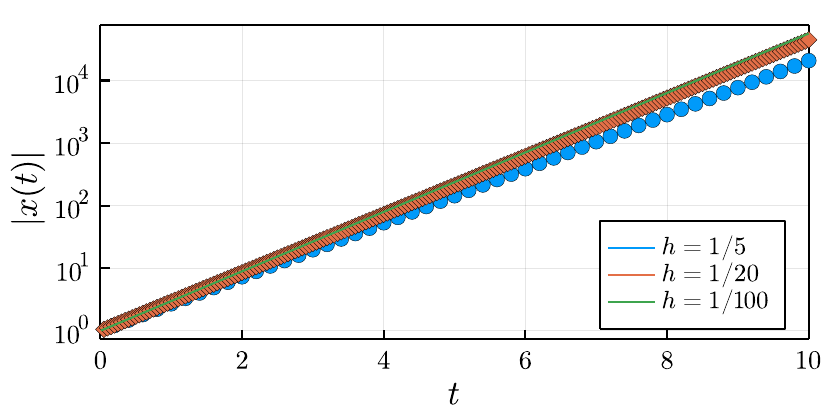}};
    \node[scale=.45] at (8,4) {\includegraphics{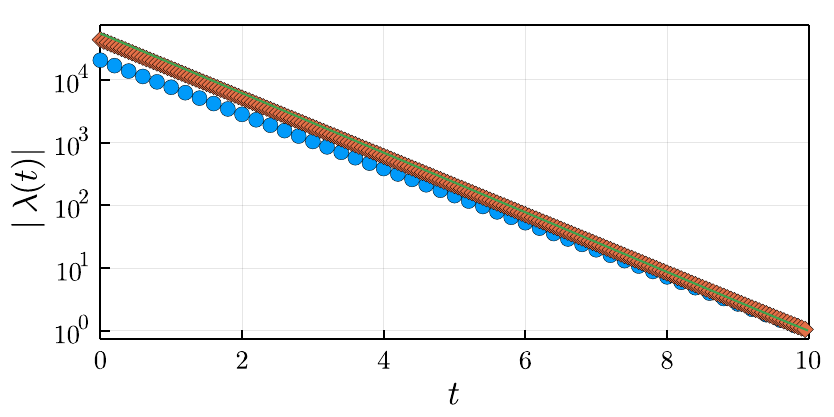}};
    \node[scale=.45] at (0,8) {\includegraphics{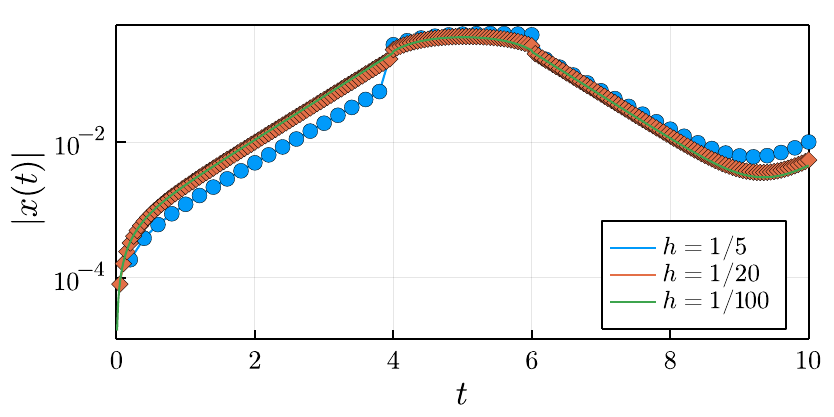}};
    \node[scale=.45] at (8,8) {\includegraphics{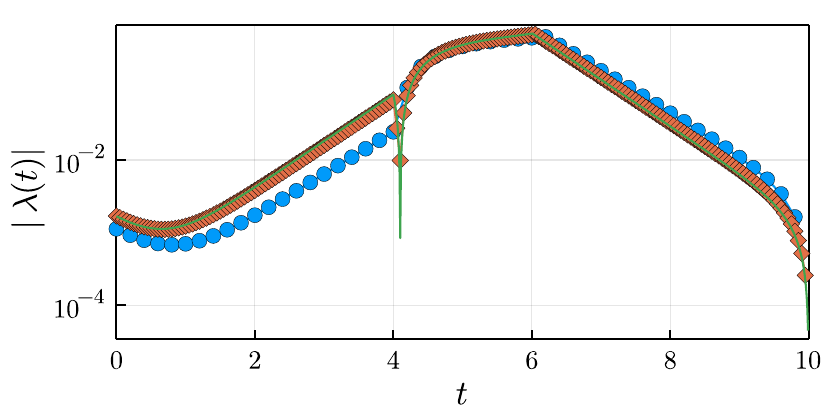}};
    \node[scale=.45] at (0,12) {\includegraphics{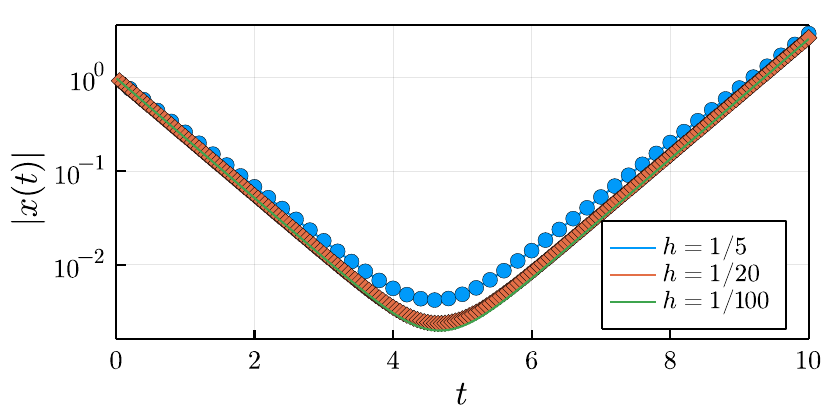}};
    \node[scale=.45] at (8,12) {\includegraphics{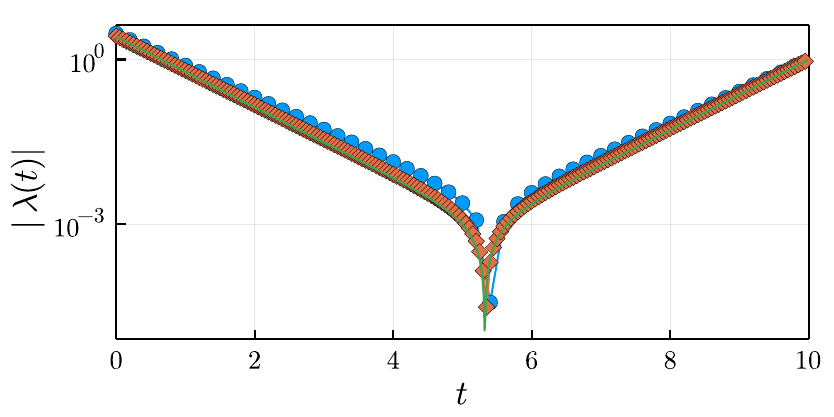}};
  \end{tikzpicture}
  \caption{Numerical validation of \cref{thm:main}. Top: regular, boundary perturbation. Second from the top: regular, middle perturbation. Third from the top: near-singular, boundary perturbation. Bottom: near-singular, middle perturbation.}\label{fig:results}
\end{figure}

\section{Conclusions and Future Work}\label{sec:concl}

We have presented an improved approximation bound for the Moore-Penrose inverse of banded systems. Our improved bounds are particularly useful for analyzing saddle point systems resulting from the discretization of continuous domain problems. Specifically, our bound does not diverge as the mesh size is refined; we have demonstrated this by showing that solution mappings for optimal control problems exhibit the exponentially decaying structure, given mild assumptions. We anticipate that our findings can be extended to various problem domains, including optimization problems constrained by partial differential equations.
In the future, we are interested in analyzing the banded approximation of linear operators on a suitable class of Banach and Hilbert spaces. 

\section*{Acknowledgment}
This material was based upon work
supported by the U.S. Department of Energy, Office of Science,
Office of Advanced Scientific Computing Research (ASCR) under
Contract DE-AC02-06CH11347. Wallace Tan is supported by the MathWorks Fellowship. 

\bibliographystyle{siamplain}
\bibliography{references}

\end{document}
